\documentclass[12pt,reqno]{amsart}

\usepackage{amssymb, amsmath, amsfonts, latexsym}
\usepackage{enumerate}
\usepackage{color}
\setlength{\topmargin}{0cm} \setlength{\oddsidemargin}{0cm}
\setlength{\evensidemargin}{0cm} \setlength{\textwidth}{15truecm}
\setlength{\textheight}{22.8truecm}

\newtheorem{thm}{Theorem}[]
\newtheorem{lem}{Lemma}[section]

\newtheorem{rmk}{Remark}[section]
\theoremstyle{definition}

\numberwithin{equation}{section} \theoremstyle{remark}

\title[$W_1$ between Chiral and Gumbel ]{\bf Optimal $W_1$ and Berry-Esseen bound between  the spectral radius of large Chiral non-Hermitian random matrices and Gumbel}

\author{Yutao Ma}
\address{Yutao MA\\ School of Mathematical Sciences $\&$ Laboratory  of Mathematics and Complex Systems of Ministry of Education, Beijing Normal University, 100875 Beijing, China.} 
\thanks{The research of Yutao Ma was supported in part by NSFC 12171038 and 985 Projects.}
\email{mayt@bnu.edu.cn}

\author{Siyu Wang}
\address{Siyu Wang\\ School of Mathematical Sciences $\&$ Laboratory of Mathematics and Complex Systems of Ministry of Education, Beijing Normal University, 100875 Beijing, China.}
\email{wang\_siyu@mail.bnu.edu.cn}

\begin{document}
\maketitle

\begin{abstract}

Consider the chiral non-Hermitian random matrix ensemble with parameters $n$ and $v$ and the non Hermiticity parameter $\tau=0$  and let $(\zeta_i)_{1\le i\le n}$ be its $n$ eigenvalues with positive $x$-coordinate. Set 
$$X_n:=\sqrt{\log s_n}\left(\frac{2n \max_{1\le i\le n}|\zeta_i|^2-2\sqrt{n(n+v)}}{\sqrt{2n+v}}-a(s_{n})\right)$$ with $s_n=n(n+v)/(2n+v)$ and $a(s_n)=\sqrt{\log s_n}-\frac{\log(\sqrt{2\pi}\log s_n)}{\sqrt{\log s_n}}.$ It was proved in \cite{JQ} that $X_n$ converges weakly to the Gumbel distribution $\Lambda$. In this paper, we give in further that  
$$\lim_{n\to\infty} \frac{\log s_n}{(\log\log s_n)^2}W_1\left(F_n, \Lambda\right)=\frac{1}{2}$$
and the Berry-Esseen bound 
$$\lim_{n\to\infty} \frac{\log s_n}{(\log\log s_n)^2}\sup_{x\in\mathbb{R}}|F_n(x)-e^{-e^{-x}}|=\frac{1}{2e}.$$
   Here, $F_n$ is the distribution (function) of $X_n.$    \end{abstract} 

{\bf Keywords:} Gumbel distribution; spectral radius; large chiral random matrix; Wasserstein distance.  

{\bf AMS Classification Subjects 2020:} 60G70, 60B20, 60B10. 

\section{Introduction}\label{chap:intro}
For integers $n\ge 1$ and $v\ge 0,$ let $P$ and $Q$ be $(n+v)\times n$ matrices with i.i.d centered complex Gaussian entries of variance $1 /(4 \mathrm{n}).$ These are the building
blocks of the two correlated random matrices
$$
\Phi = \sqrt{1+\tau} P+\sqrt{1-\tau} Q
\text{\quad  and \quad }
\Psi = \sqrt{1+\tau} P-\sqrt{1-\tau} Q.
$$
Here, $\tau \in[0,1]$ is a non-Hermiticity parameter. Notably, for $\tau=0$, $\Phi$ and $\Psi$ are uncorrelated Gaussian random matrices, while for $\tau=1$, they become perfectly correlated, i.e., $\Phi = \Psi$. We are interested in the $(2 n+\nu) \times(2 n+\nu)$ random Dirac matrix
$$\mathcal{D}=\left(\begin{matrix}{}
  0 &\Phi \\
 \Psi^*  & 0
\end{matrix} \right),
$$
where $ \Psi^*$ is the complex conjugate matrix of $\Psi$. 
This matrix was first undertaken by Osborn \cite{osborn}, in the context of a study relating to quantum chromodynamics.
The spectrum of $\mathcal{D}$ consists of an eigenvalue of multiplicity $\nu$ at the origin, and $2 n$ complex eigenvalues $\left\{ \pm \zeta_k\right\}_{k=1}^n$ which come in pairs with opposite sign. 
Without loss of generality, let $(\zeta_k)_{1\le k\le n}$ be the eigenvalues with non-negative real part. The joint density function of $\zeta_1, \cdots, \zeta_{n},$ was derived by Osborn in \cite{osborn}, is proportional to   
\begin{equation}\label{density0}
\prod_{1 \leq j<k \leq n}\left|z_{j}^{2}-z_{k}^{2}\right|^{2} \cdot \prod_{j=1}^{n}\left|z_{j}\right|^{2(v+1)} \exp \left(\frac{2 \tau n \operatorname{Re}\left(z_{i}^{2}\right)}{1-\tau^{2}}\right) K_{v}\left(\frac{2 n\left|z_{j}\right|^{2}}{1-\tau^{2}}\right)
\end{equation}
for all $z_1, \cdots, z_n\in \mathbb{C}$ with ${\rm Re}(z_j)>0$ for each $j,$ where $K_v$ is the modified Bessel function of the second kind (more details on $K_v$ are given in Lemma \ref{kp} below). Here and later, the density is relative to the Lebesgue  measure on $\mathbb{C}^n.$ For details on the model, we refer to \cite{osborn} with $\mu=\sqrt{(1-\tau)} / \sqrt{(1+\tau)}, \mathcal{D}_{I I}=i \mathcal{D}, \alpha=2 /(1+\tau), A=\sqrt{1+\tau} P, B=i \sqrt{1+\tau} Q, C=i \Phi$ and $D=i \Psi^*.$
 
The random variable $\max_{1\le j\le n}|\zeta_j|$ is called the spectral radius of $\mathcal{D}.$ Unlike the Tracy-Widom distribution appears in the edge behavior in classical ensembles of Gaussian Hermitian matrices and has universality in general case, Gumbel distribution shows up as the limit of the spectral radius of non-Hermitian random matrices. 

 Pioneering work by Rider \cite{Rider03} and Rider and Sinclair \cite{Rider 14} has delved into the spectral radius of the real, complex, and quaternion Ginibre ensembles. 
 They demonstrated that appropriately scaled spectral radius converges in law to a Gumbel distribution and 
 further generalizations of this result can be found in the work of Chafai and P\'{e}ch\'{e} \cite{Chafaii}. 
Additionally, recent research by Lacroix-A-Chez-Toine \emph{ et al.} \cite{Lacroix} has explored the extreme statistics in the complex Ginibre ensemble, showing an intermediate fluctuation regime between large and typical fluctuations to the left of the mean. Their findings indicate that this interpolation also applies to general Coulomb gases. The Gumbel distribution also has a universality for the spectral radius of complex matrices $X$ with independent and identically distributed entries, where the entry $X_{1, 1}\stackrel{d}{=}n^{-1/2}\chi$ with 
$$\mathbb{E}\chi=0, \quad\mathbb{E}|\chi|^2=1, \quad \mathbb{E}\chi^2=0, \quad \mathbb{E}|\chi|^{p}\le c_p,$$ which is recently confirmed by Cipolloni {\it et al.} \cite{CipoErXu} and solves the long-standing conjecture proposed in \cite{BCC18, BCG22, Chafaii18, Chafaii}. 
 
In previous research, the limiting empirical distribution associated with the Dirac matrix $D$ for general $\tau \in [0,1)$ was established in \cite{ABK}. For the case $\tau = 1$, the empirical measures converge weakly to the Marckenko-Pastur law \cite{AnG, BSbook, MP}.

Let $(\zeta_1, \cdots, \zeta_n)$ have joint density function given in \eqref{density0}.
Given $\tau\in [0, 1),$ and $v\ge 0$ is fixed, the authors of \cite{AB} proved that  the $\max_{1\le j\le n} {\rm Re}(\zeta_j)$ converged to the Gumbel distribution and also with different scaling, $({\rm Re}(\zeta_j), {\rm Im}(\zeta_j)),$ as a point process, converged to a Poisson  process.  For the case $\tau =1,$ the largest corresponding eigenvalue converges to the Tracy-Widom distribution \cite{Johansson}.

For the particular case $\tau=0,$  Chang {\it et al.} \cite{JQ}  established a central limit theorem for $(\frac{n}{n+v})^{1/2}\max\limits_{1\le j\le n}|\zeta_j|^2.$ 
Precisely, setting $s_{n}=\frac{n(n+v)}{2n+v},$ 
  $$a(y)=(\log y)^{1/2}-(\log y)^{-1/2}\log(\sqrt{2\pi} \log y), \quad b(y)=(\log y)^{-1/2}$$ and 
  $$X_n:=\frac{1}{b(s_{n})}\left(\frac{2n \max_{1\le i\le n}|\zeta_i|^2-2\sqrt{n(n+v)}}{\sqrt{2n+v}}-a(s_{n})\right),$$ 
they proved that $X_n$ converges weakly to the Gumbel distribution $\Lambda$ as $n\to\infty,$ whose distribution function is $\Lambda(x)=e^{-e^{-x}}, x\in\mathbb{R}.$  

Motivated by \cite{JQ}, we \cite{MW24} established deviation probabilities  for $(\frac{n}{n+v})^{1/2}\max\limits_{1\le j\le n}|\zeta_j|^2,$  which offered the speed of convergence of 
$(\frac{n}{n+v})^{1/2}\max\limits_{1\le j\le n}|\zeta_j|^2$ to $1.$ 

Also, with Meng, for complex Ginibre ensembles, we investigate the exact convergence rate of its spectral radius to the Gumbel distribution \cite{L1WGinibre}, which is a natural continued work of the weak convergence. See \cite{Karoui, Rider 14, SX23} for related research on the convergence rate of extreme eigenvalues of GOE/GUE and complex Wishart to Tracy-Widom law. 

These references motivate us to offer the speed of the weak convergence of $X_n$ to the Gumbel distribution. Recall the well-known $W_1$ distance between two probabilities $\mu$ and $\nu $ on $\mathbb{R}$ (see \cite{Villani})
$$W_1(\mu, \nu)=\inf_{\pi} \int d(x, y) \pi(dx, dy),$$ where the infimum is taking 
over all couplings  $\pi$ with marginals $\mu$ and $\nu.$ For this particular case, a computable closed formula of  $W_1(\mu, \nu)$ says 
$$W_1(\mu, \nu)=\int_{-\infty}^{+\infty} |F_{\mu}(x)-F_{\nu}(x)| dx,$$
which is the starting point of this paper.

Review the joint density function of $(\zeta_1, \cdots, \zeta_{n})$ when $\tau=0$ 
\begin{equation}\label{density}
	f(z_1, \cdots, z_n)=C\prod_{1\le j<k\le n}|z_j^2-z_k^2|^2 \prod_{j=1}^n |z_j|^{2(v+1)}K_v(2n |z_j|^2).
\end{equation} 
 
Here comes first the convergence rate of $X_n$ to $\Lambda$ under $W_1$ .  
 
 \begin{thm}\label{main} 
  Let $(\zeta_1, \cdots, \zeta_{n})$ be random vector whose joint density function is given in \eqref{density} and let $X_n$ and $s_n$ be defined as above and let $\mathcal{L}(X_n)$ be the distribution of $X_n$.  
   Then, 
  $$\lim_{n\to\infty} \frac{\log s_n}{(\log\log s_n)^2}W_1\left(\mathcal{L}(X_n), \Lambda\right)=\frac{1}{2}.$$  \end{thm}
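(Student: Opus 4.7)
The plan is to exploit the identity $W_1(\mathcal{L}(X_n), \Lambda) = \int_{\mathbb{R}} |F_n(x) - \Lambda(x)|\,dx$ and reduce the problem to a sharp pointwise asymptotic for $F_n - \Lambda$ combined with dominated tail bounds. First, after the change of variables $w_j = \zeta_j^2$, the joint density \eqref{density} becomes a standard rotationally invariant planar ensemble with weight $|w|^v K_v(2n|w|)$. A Kostlan-type identity then writes $\max_{1\le j\le n}|\zeta_j|^2 = \max_{1\le j\le n}|w_j|$ as $\max_{1\le k\le n} R_k$, where the $R_k$ are independent nonnegative random variables with explicit densities determined by the Bessel weight and the Vandermonde. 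Consequently $F_n(x) = \prod_{k=1}^n G_{k,n}(a(s_n) + b(s_n) x)$ for explicit one-dimensional cdfs $G_{k,n}$.

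The second step is a two-term asymptotic expansion of $\log F_n(x) = \sum_{k=1}^n \log G_{k,n}(a(s_n)+b(s_n)x)$. Using the large-argument Bessel asymptotics $K_v(z) = \sqrt{\pi/(2z)}\,e^{-z}(1+O(1/z))$ from Lemma \ref{kp} and standard incomplete-Gamma expansions, each $1 - G_{k,n}$ is identified to sub-leading order; summing in $k$ and using $s_n = n(n+v)/(2n+v)$ yields
$$\log F_n(x) = -e^{-x} - e^{-x}\,\frac{(\log\log s_n)^2}{2\log s_n} + o\!\left(\frac{(\log\log s_n)^2}{\log s_n}\right)$$
uniformly on compact $x$-sets. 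The leading $-e^{-x}$ recovers the weak convergence of \cite{JQ}. The coefficient $1/2$ in the correction comes from the residual of the Lambert-$W$-type inversion defining $a(s_n)$: this $a$ retains only two terms of the expansion of the $(1-1/s_n)$-quantile of $\Lambda$, and the next omitted term equals $(\log\log s_n)^2 / (2 (\log s_n)^{3/2})$, which becomes $(\log\log s_n)^2/(2 \log s_n)$ after division by $b(s_n) = (\log s_n)^{-1/2}$.

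The second step immediately gives the pointwise identity
$$|F_n(x) - \Lambda(x)| = \frac{(\log\log s_n)^2}{2\log s_n}\, e^{-x}\Lambda(x)\,(1+o(1))$$
uniformly on compacts. Since $\int_{\mathbb{R}} e^{-x}\Lambda(x)\,dx = \int_{\mathbb{R}} \Lambda'(x)\,dx = 1$, dominated convergence on an expanding window $[-A_n, A_n]$ with $A_n \to \infty$ sufficiently slowly gives the desired constant. The left tail $x \le -A_n$ is negligible since both $F_n(x)$ and $\Lambda(x)$ are bounded by $e^{-e^{A_n}}$; the right tail $x \ge A_n$ is controlled by the deviation estimates for $\max|\zeta_j|^2$ from \cite{MW24}, giving $1-F_n(x) \lesssim e^{-x}$ uniformly in $n$, whose integral past $A_n$ is $e^{-A_n} = o((\log\log s_n)^2/\log s_n)$ whenever $A_n \gg \log\log s_n$.

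The principal obstacle is the sub-leading step: the Bessel weight $r^v K_v(2nr)$ together with the Vandermonde-induced prefactor $r^{k-1}$ must be Laplace-analyzed uniformly in $k$, and the $n$-fold sum in $\log F_n$ must be tracked to order $(\log\log s_n)^2/\log s_n$ without losing the exact coefficient $1/2$. The dominant contribution comes from indices $k$ close to $n$, where $R_k$ concentrates near $\sqrt{n(n+v)}/(2n)$; the small-$k$ indices must be shown negligible via a Laplace-type localization. Once the coefficient $1/2$ is secured, the $L^1$ integration and tail bookkeeping reduce to standard dominated-convergence arguments, and matching the $\liminf$ and $\limsup$ in $\tfrac{\log s_n}{(\log\log s_n)^2} W_1$ follows from applying the same expansion to both bounds simultaneously.
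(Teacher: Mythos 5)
Your overall strategy coincides with the paper's: the Kostlan reduction to independent radial variables (Lemma \ref{s}), the closed formula $W_1=\int_{\mathbb{R}}|F_n-\Lambda|\,dx$, a two-term expansion of $\log F_n$ on a central window producing a correction of the form $-e^{-x}(\log(\sqrt{2\pi}\log s_n)-x)^2/(2\log s_n)$, and separate tail estimates. Your claimed expansion and your explanation of where the constant $\tfrac12$ comes from (the omitted next term of the Gumbel quantile expansion built into $a(s_n)$) are both correct. However, two gaps remain beyond the step you yourself flag as the ``principal obstacle'' (the uniform-in-$k$ tail asymptotics and the summation over roughly $n^{3/5}$ indices without losing the coefficient, which is where most of the paper's work in Lemmas \ref{vbexpression}--\ref{vb} lives).

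First, the theorem allows $v=v_n\to\infty$, and the only Bessel input you invoke is the fixed-$v$ asymptotic $K_v(z)\sim\sqrt{\pi/(2z)}\,e^{-z}$, which is not uniform in $v$; for growing $v$ one must use the uniform asymptotic of $K_v(vx)$ in Lemma \ref{kp}(2), and the incomplete-Gamma identification of the individual tails is replaced by a Laplace analysis of the phase $\tau_j$ (the paper's Section 4 and Lemma \ref{vi}). Your argument as written only covers bounded $v$. Second, the left tail: you bound $\int_{-\infty}^{-A_n}|F_n-\Lambda|$ by the pointwise bound $e^{-e^{A_n}}$, but $F_n(x)>0$ down to $x=-x_0$ with $x_0=\log s_n+2\sqrt{s_n\log s_n}$ of order $\sqrt{n\log n}$, so the pointwise bound must be integrated over an interval of that length; with $A_n$ growing ``sufficiently slowly'' the product need not vanish, and the inequality $F_n(-A_n)\lesssim e^{-e^{A_n}}$ is itself a nontrivial lower bound on $\sum_k\mathbb{P}(2nY_k>u_n(-A_n))$ requiring the same tail machinery. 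The paper handles this by splitting the left tail again at $-\alpha_n=-a(s_n)/b(s_n)$ and using a crude bound of the form $(2/3)^{\sqrt{s_n}}$ on the far range. Both gaps are repairable, but they are genuine omissions rather than routine bookkeeping.
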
 

Meanwhile, we also have the following Berry-Esseen bound. 
\begin{thm}\label{main2} 
  Let $(\zeta_1, \cdots, \zeta_{n})$ and $X_n$ and $s_n$ be defined as above. Let $F_n$ be the distribution function of $X_n.$ 
   Then,  
  $$\lim_{n\to\infty} \frac{\log s_n}{(\log\log s_n)^2}\sup_{x\in\mathbb{R}}|F_n(x)-e^{-e^{-x}}|=\frac{1}{2e}.$$ 
 \end{thm}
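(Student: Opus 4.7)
The plan is to take the sup of the same pointwise expansion of $F_n-\Lambda$ that underlies Theorem~\ref{main}; the Berry-Esseen constant $\tfrac{1}{2e}$ is then forced by the elementary fact that the Gumbel density $x\mapsto e^{-x-e^{-x}}$ attains its global maximum $1/e$ at the unique point $x=0$.

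First I would rewrite $F_n$ as the distribution of the maximum of independent random variables. As already exploited in \cite{JQ}, the change of variables $w_j=z_j^2$ turns \eqref{density} into a radial Ginibre-type determinantal ensemble, so a standard Kostlan-type argument gives that $(2n|\zeta_j|^2)_{1\le j\le n}$ has the same unordered law as an independent family with explicit Bessel-type marginals $H_j$. Setting $u(x):=\sqrt{2n+v}\,(b(s_n)x+a(s_n))+2\sqrt{n(n+v)}$, this yields the product representation $F_n(x)=\prod_{j=1}^n H_j(u(x))$. I would then expand $\log F_n(x)=\sum_{j=1}^n \log H_j(u(x))$ using the Hankel asymptotics of $K_v$ together with the tail expansion of the relevant incomplete Bessel integrals --- precisely the bookkeeping already carried out for Theorem~\ref{main} --- to obtain, uniformly on the window $|x|\le A_n:=C\log\log s_n$,
$$
F_n(x)-\Lambda(x)\;=\;\frac{(\log\log s_n)^2}{2\log s_n}\,e^{-x-e^{-x}}\bigl(1+o(1)\bigr).
$$

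To conclude, since $\sup_{x\in\mathbb{R}} e^{-x-e^{-x}}=1/e$ and is attained only at $x=0$, the supremum of $|F_n-\Lambda|$ over $[-A_n,A_n]$ is $\tfrac{1}{2e}\cdot\tfrac{(\log\log s_n)^2}{\log s_n}(1+o(1))$, with the matching lower bound provided by direct evaluation at $x=0$. For $|x|>A_n$ one has $1-\Lambda(A_n)+\Lambda(-A_n)=O(e^{-A_n})$; combined with the deviation bounds for $X_n$ from \cite{MW24}, which give $1-F_n(A_n)+F_n(-A_n)=O(e^{-A_n})$ as well, this forces the tail contribution to the sup to be $o((\log\log s_n)^2/\log s_n)$, and Theorem~\ref{main2} follows.

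The main obstacle is promoting the pointwise expansion to one that is uniform on the slowly growing window $[-A_n,A_n]$ with an error strictly smaller than the leading term $(\log\log s_n)^2/\log s_n$. This requires keeping an extra order in the Hankel expansion of $K_v$, and a corresponding extra order in the saddle-point expansion of each integral $\int_{u(x)}^\infty H_j'$, and then summing the $n$ resulting contributions with sharp control of the $j$-dependent prefactors. This is the technical core that Theorem~\ref{main2} shares with Theorem~\ref{main}; once it is in place, the two-step supremum analysis above closes the argument.
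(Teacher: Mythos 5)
Your proposal follows essentially the same route as the paper: reuse the pointwise expansion of $F_n-\Lambda$ established in the proof of Theorem~\ref{main}, observe that the supremum is governed by the peak of the Gumbel density $e^{-x-e^{-x}}$ at $x=0$ with value $1/e$, and dispose of the tails by monotonicity plus the tail bounds already available. The conclusion and the constant $\tfrac{1}{2e}$ come out the same way.

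One step is stated incorrectly, though it does not change the answer. The expansion you claim uniformly on $|x|\le C\log\log s_n$, namely $F_n(x)-\Lambda(x)=\frac{(\log\log s_n)^2}{2\log s_n}e^{-x-e^{-x}}(1+o(1))$, is false on that window (and has the wrong sign: $F_n<\Lambda$ there). What the Theorem~\ref{main} analysis actually gives is
$$\Lambda(x)-F_n(x)=\frac{(\ell_2(n)-x)^2}{2\log s_n}\,e^{-x-e^{-x}}\bigl(1+o(1)\bigr),\qquad \ell_2(n)=\log(\sqrt{2\pi}\log s_n),$$
and $(\ell_2(n)-x)^2=(\log\log s_n)^2(1+o(1))$ only for $x=o(\log\log s_n)$; at $x=-\ell_2(n)$, say, you are off by a factor $4$. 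Consequently the supremum cannot be read off directly from $\sup_x e^{-x-e^{-x}}=1/e$: you must maximize the product $e^{-x-e^{-x}}(\ell_2(n)-x)^2$, in which the polynomial factor grows as $x$ decreases while the density decays, so a short verification is needed that the maximum is still $e^{-1}\ell_2^2(n)(1+o(1))$. The paper does exactly this by showing the minimizer $x_0$ of $g(x)=e^{-x}+x-2\log(\ell_2(n)-x)$ lies in $(-2/\ell_2(n),0)$, hence $g(x_0)=1-2\log\ell_2(n)+o(1)$. This is an easy fix, but as written your maximization step rests on a uniform expansion that does not hold.
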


\begin{rmk} The form of the joint density \eqref{density} guarantees a crucial point that $X_n$ is connected to the maximum statistic of a sequence of independent random variables $(Y_k)_{k\ge 1}$ (Lemma \ref{s}) taking values on $\mathbb{R}$ and then the argument in \cite{L1WGinibre} works here. What we need are a proper upper bound and moderate deviations of Cramer type for tail probabilities  of $Y_{k}$ with $k$ large enough (Lemmas \ref{clt0} and \ref{vi}). The exact speed $\frac{(\log\log s_n)^2}{\log s_n}$ essentially comes out from the scaling $a(s_n)$ and $b(s_n)$ and we believe $(\log s_n)^{-1}$ is universal while the power of $\log \log s_n$ varies  according to the slight change of the coefficient of the term $\log(\sqrt{2\pi}\log y)$ in the definition of the function $a.$ 
\end{rmk} 
The remainder of this paper will be organized as follows : in the following section, we give some necessary lemmas for bounded $v,$ which is less difficult and then the third section is devoted to the proofs related to the case when $v$ is bounded. Once we figure out the key estimates to the proof, in  
 the forth section, we give the proof corresponding to the case $v\to\infty.$  
 The last section is focus on the proof of Theorem \ref{main2}.

For simplicity, we write $s_n$  not $s_{n, v}$ to denote $\frac{n(n+v)}{2n+v}$ even it does depend on $v$ and the same principle carries out for other notations. We use frequently $t_n=O(z_n)$ or $t_n=o(z_n)$ if $\lim_{n\to\infty}\frac{t_n}{z_n}=c\neq 0$  or $\lim_{n\to\infty}\frac{t_n}{z_n}=0,$ respectively and $t_n=\tilde{O}(z_n)$ if $\lim_{n\to\infty}\frac{t_n}{z_n}$ exists.  We also use $t_n\ll z_n$ or equivalently $z_n\gg t_n$ to represent $\lim_{n\to\infty}\frac{t_n}{z_n}=0.$ 

\section{Preliminaries} 
 In this section, we collect some key lemmas.   
 
 The first crucial one is borrowed from \cite{JQ}, originally coming from \cite{Kostlan}, which transfers the corresponding probabilities of 
 $\max_{1\le i\le n}|\zeta_i|^2$ to those related to independently random variables taking values in real line. It says 
 \begin{lem}\label{s}
	 Let $\zeta_1, \cdots, \zeta_n$ be random variables with a joint density function given by \eqref{density}.  Let $(Y_j)_{1\le j\le n}$ be independent random variables and the density function of $Y_j$ is proportional to $y^{2j+v-1} K_v(2n y) 1_{y>0}.$ Then for any symmetric function $g,$ $g(|\zeta_1^2|, \cdots, |\zeta_n^2|)$ and $g(Y_1, \cdots, Y_n)$ have the same distribution. 
\end{lem}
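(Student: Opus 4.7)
The plan is to invoke a Kostlan-type identity that turns a squared Vandermonde density with a rotationally invariant single-particle weight into the law of independent radii taken up to permutation. The argument has three steps: a change of variables, an angular integration, and recognition of the resulting permanent as a symmetrized product.

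First I would introduce $w_j = \zeta_j^2$. Since $\mathrm{Re}(\zeta_j) > 0$, the map $\zeta\mapsto \zeta^2$ is a bijection onto $\mathbb{C}\setminus(-\infty,0]$ with area Jacobian $dA(\zeta) = \frac{1}{4|w|}\,dA(w)$. Substituting into \eqref{density} and absorbing factors into the normalization, the joint density of $(w_1,\ldots,w_n)$ becomes
$$
g(w_1,\ldots,w_n) \;=\; C'\prod_{1\le j<k\le n}|w_j-w_k|^2\prod_{j=1}^n |w_j|^{v}K_v(2n|w_j|),
$$
on $\mathbb{C}^n$. The crucial feature is that the single-particle weight $|w|^v K_v(2n|w|)$ depends only on $|w|$.

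Second, I would pass to polar coordinates $w_j=\rho_j e^{i\phi_j}$ with $dA(w_j)=\rho_j\,d\rho_j\,d\phi_j$ and expand the squared Vandermonde as
$$
\prod_{j<k}|w_j-w_k|^2 \;=\; \sum_{\sigma,\tau\in S_n}\mathrm{sgn}(\sigma)\mathrm{sgn}(\tau)\prod_{j=1}^n w_j^{\sigma(j)-1}\overline{w_j}^{\,\tau(j)-1}.
$$
Each cross term carries a factor $e^{i(\sigma(j)-\tau(j))\phi_j}$; integrating over $\phi_j\in(-\pi,\pi)$ forces $\sigma(j)=\tau(j)$ for every $j$, so only the diagonal $\sigma=\tau$ survives. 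After the angular integration, the joint density of $(\rho_1,\ldots,\rho_n)=(|w_1|,\ldots,|w_n|)=(|\zeta_1|^2,\ldots,|\zeta_n|^2)$ is proportional to
$$
\sum_{\sigma\in S_n}\prod_{j=1}^n \rho_j^{2\sigma(j)+v-1}K_v(2n\rho_j)\mathbf{1}_{\rho_j>0}.
$$

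Third, setting $h_j(\rho)=\rho^{2j+v-1}K_v(2n\rho)\mathbf{1}_{\rho>0}$, the above is proportional to $\sum_{\sigma}\prod_j h_{\sigma(j)}(\rho_j)$. This is exactly the density of $(Y_{\pi(1)},\ldots,Y_{\pi(n)})$ where $\pi$ is a uniform random permutation independent of the independent random variables $(Y_j)_{1\le j\le n}$ with marginal densities proportional to $h_j$. For any symmetric function $g$ this yields
$$
g(|\zeta_1|^2,\ldots,|\zeta_n|^2)\;\stackrel{d}{=}\;g(Y_{\pi(1)},\ldots,Y_{\pi(n)})\;=\;g(Y_1,\ldots,Y_n),
$$
which is the claim (noting $|\zeta_j^2|=|\zeta_j|^2$). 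The only delicate point is bookkeeping constants through the change of variables and verifying that $\int_0^\infty h_j(\rho)\,d\rho<\infty$, which follows from the exponential decay of $K_v$ at infinity and its mild singularity at zero; once the permanental representation is reached the conclusion is immediate, so no real obstacle arises.
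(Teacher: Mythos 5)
Your proof is correct and is the standard Kostlan-type argument: squaring to remove the chiral structure, angular integration to reduce the squared Vandermonde to its diagonal permanental form, and identification of the result as the law of a uniformly permuted vector of independent radii. The paper itself gives no proof of this lemma --- it is quoted from \cite{JQ} and attributed to \cite{Kostlan} --- and your argument is precisely the one underlying that citation, with the bookkeeping (Jacobian $dA(\zeta)=\tfrac{1}{4|w|}\,dA(w)$, the extra $\rho$ from polar coordinates, integrability of $\rho^{2j+v-1}K_v(2n\rho)$) handled correctly.
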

 
Lemma \ref{s} ensures  
$$\aligned\mathbb{P}(X_n\le x)&=\mathbb{P}\left(2n Y_{(n)}\le 2\sqrt{n(n+v)}+\sqrt{2n+v}(a(s_n)+b(s_n)x)\right)\\
&=\prod_{1\le j\le n}\mathbb{P}\left(2n Y_j\le u_n(x)\right),\endaligned $$
where for simplicity $Y_{(n)}=\max\limits_{1\le i\le n} Y_i$ and 
$$u_n(x):=2\sqrt{n(n+v)}+\sqrt{2n+v}(a(s_n)+b(s_n)x).$$ 
What we need is to investigate the tail probability of  $2n Y_j.$
Since the modified Bessel function of the second kind $K_v$ plays a crucial role in the density function of $(Y_j)_{1\le j\le n},$ we first introduce some properties of $K_v$ in the following lemma  
(see \cite{AS} and \cite{Olver}). 
\begin{lem}\label{kp} The following statements hold:
\begin{itemize} 
%\item[(1).] (Formula 10.32.8 in \cite{Olver}) Given $v\ge 0.$ It holds that 
%$$K_v(x)=\frac{\sqrt{\pi} x^v}{2^v \Gamma(v+1/2)}\int_1^{\infty} e^{-x t}(t^2-1)^{v-1/2} dt$$ for any $x> 0.$
\item[(1).] (Formula 10.40.2 \cite{Olver} ) Suppose that $v\ge 0$ is fixed. Then, $$K_{v}(x)=\sqrt{\frac{\pi}{2 x}} e^{-x}\left[1+O\left(\frac{1}{x}\right)\right] \quad \text { as } x \rightarrow \infty.$$
\item[(2).] (Formula 10.41.4  \cite{Olver}) As $v\to\infty,$ we have that 
$$K_{v}(v x)=\sqrt{\frac{\pi}{2 v}} x^{-v}\left(1+x^{2}\right)^{-1/4} e^{-v (\sqrt{1+x^2}-\log(1+\sqrt{1+x^2})}\left[1+O\left(\frac{1}{v}\right)\right]$$ 
uniformly on $x\in (0, +\infty).$ 
\end{itemize} 	
\end{lem}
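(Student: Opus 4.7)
The plan is to treat this as a compendium of classical asymptotics for the modified Bessel function of the second kind, both derivable by saddle-point analysis of the standard integral representation
$$K_v(z)=\int_0^\infty e^{-z\cosh t}\cosh(vt)\,dt,\qquad z>0.$$
Strictly speaking the paper only needs to cite Formulas 10.40.2 and 10.41.4 of Olver, but I would sketch the underlying derivations to be self-contained.

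For part (1), with $v$ fixed and $x\to\infty$, I would apply Laplace's method directly. The exponent $-x\cosh t$ has a unique, non-degenerate maximum at $t=0$ with value $-x$ and second derivative $-x$, while $\cosh(vt)=1+O(t^2)$ is slowly varying there. Standard Laplace's method on the neighborhood $|t|\le x^{-1/2+\varepsilon}$ (with the tails exponentially negligible) yields
$$K_v(x)=e^{-x}\sqrt{\frac{2\pi}{2x}}\cdot\tfrac{1}{2}\bigl(1+O(x^{-1})\bigr)=\sqrt{\tfrac{\pi}{2x}}\,e^{-x}\bigl(1+O(x^{-1})\bigr),$$
which is (1).

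For part (2), the order and the argument both grow, so I would use Debye's steepest-descent method. Setting $z=vx$ and writing $\cosh(vt)=(e^{vt}+e^{-vt})/2$,
$$K_v(vx)=\tfrac{1}{2}\int_0^\infty e^{-v(x\cosh t-t)}\,dt+\tfrac{1}{2}\int_0^\infty e^{-v(x\cosh t+t)}\,dt;$$
the second integral is exponentially dominated. For the first, solving $x\sinh t=1$ gives the saddle $t_0=\log\bigl((1+\sqrt{1+x^2})/x\bigr)$, where $x\cosh t_0=\sqrt{1+x^2}$ and the exponent takes the value $\sqrt{1+x^2}-t_0$. A Gaussian expansion around $t_0$ then produces the prefactor
$$\tfrac{1}{2}\sqrt{\tfrac{2\pi}{v\sqrt{1+x^2}}}=\sqrt{\tfrac{\pi}{2v}}\,(1+x^2)^{-1/4},$$
and the exponential factor $e^{-v(\sqrt{1+x^2}-\log(1+\sqrt{1+x^2}))}\cdot x^{-v}$, matching (2) after extracting $x^{-v}$ from $-vt_0$.

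The main obstacle, which is also why the formula in Olver is so useful as a black box, is ensuring that the relative error $O(1/v)$ is \emph{uniform} in $x\in(0,\infty)$. Near $x=0$ the saddle $t_0\sim-\log x$ drifts to infinity, and near $x=\infty$ it collapses to $0$; a careful rescaling argument (or Olver's treatment) is needed to show that the remainder in the Gaussian expansion remains $O(1/v)$ in both regimes. Since the paper only needs the leading order uniformly, invoking Formulas 10.40.2 and 10.41.4 of Olver is the most efficient route, and that is presumably what the proof does.
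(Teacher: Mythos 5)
Your proposal is correct and matches the paper, which gives no proof of this lemma at all: it simply cites Formulas 10.40.2 and 10.41.4 of Olver's handbook, exactly as you anticipate at the end. Your supplementary Laplace/Debye saddle-point sketches (including the saddle $t_0=\log((1+\sqrt{1+x^2})/x)$ and the factor $\tfrac12$ from the endpoint in part (1)) are accurate, but they are not part of the paper's argument.
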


Next, we check the tail probabilities of $2n Y_j.$  
Setting \begin{equation}\label{zjconstant} Z_j:=\int_{0}^{\infty} y^{2j+v-1} K_v(y) dy,\end{equation}
we know  from the asympototic of $K_v$ with $v$ bounded that 
\begin{equation}\label{expretail}\mathbb{P}(2n Y_j>t)=\sqrt{\frac{\pi}{2}} Z_j^{-1}(1+O(t^{-1})) \int_t^{\infty} y^{2j+v-3/2} e^{-y} dy
\end{equation} 
for $t$ large enough. 
To match exactly what we need later in the proof of Theorems \ref{main} nd \ref{main2}, setting 
$j_n=o(n),$
we give a specific uniform asymptotic for $\mathbb{P}(2n Y_{j}>t)$ on $n-j_n\le j\le n.$ 

Because of the different behaviors of $K_v$ for $v$ bounded or infinite, we first provide corresponding result for $v$ bounded in this section and for the reader's convenience the lemma related to $v\to\infty$ will be postponed in the last section. We should mention that Chang {\it et al.} \cite{JQ} gave many precise calculation related to the tail probability of $Y_j.$ In our paper, for the moderate deviations of Cramer type, we adapt another way to understand the behavior of $Y_j$ for both $v$ bounded and infinite. 

\begin{lem}\label{vbexpression}
Assume that $v$ is bounded. Then, 
$$\mathbb{P}(2n Y_j>t)=\frac{1+O(t^{-1}+n^{-1})}{\Gamma(2j+v-1/2)}\int_t^{\infty} y^{2j+v-3/2} e^{-y}dy$$
\end{lem}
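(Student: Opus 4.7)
The starting point is essentially the display \eqref{expretail}, which already uses Lemma \ref{kp}(1) (with $v$ bounded, so the $O(u^{-1})$ error is uniform in $v$ on bounded sets) to replace $K_v(u)$ by $\sqrt{\pi/(2u)}e^{-u}$ inside the tail integral and pull a $(1+O(t^{-1}))$ factor outside via the crude bound $\int_t^\infty u^{-1}\cdot u^{2j+v-3/2}e^{-u}du \le t^{-1}\int_t^\infty u^{2j+v-3/2}e^{-u}du$. Hence the task reduces to showing
\begin{equation*}
\frac{\sqrt{\pi/2}}{Z_j} \;=\; \frac{1+O(n^{-1})}{\Gamma(2j+v-1/2)}
\end{equation*}
uniformly in the relevant range $n-j_n \le j \le n$ with $j_n=o(n)$, and uniformly in $v$ on bounded sets.

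For this I would first evaluate $Z_j$ in closed form via the classical Mellin--Bessel identity
\begin{equation*}
\int_0^\infty u^{\mu-1}K_\nu(u)\,du \;=\; 2^{\mu-2}\,\Gamma\!\left(\tfrac{\mu-\nu}{2}\right)\Gamma\!\left(\tfrac{\mu+\nu}{2}\right),\qquad \operatorname{Re}(\mu\pm\nu)>0,
\end{equation*}
applied with $\mu=2j+v$, $\nu=v$, which gives $Z_j = 2^{2j+v-2}\Gamma(j)\Gamma(j+v)$. Next, I would apply Legendre's duplication formula $\Gamma(2z)=(2\pi)^{-1/2}2^{2z-1/2}\Gamma(z)\Gamma(z+\tfrac12)$ with $z=j+v/2-1/4$, rewriting
\begin{equation*}
\Gamma(2j+v-1/2) \;=\; (2\pi)^{-1/2}\,2^{2j+v-1}\,\Gamma(j+v/2-1/4)\,\Gamma(j+v/2+1/4).
\end{equation*}
Taking the ratio, the powers of $2$ and $\sqrt{\pi}$ telescope exactly and the target identity reduces to
\begin{equation*}
\frac{\Gamma(j+v/2-1/4)\,\Gamma(j+v/2+1/4)}{\Gamma(j)\,\Gamma(j+v)} \;=\; 1 + O(1/n).
\end{equation*}

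Finally, this Gamma ratio is handled by the standard Stirling expansion $\log\Gamma(j+a) = (j+a-\tfrac12)\log j - j + \tfrac12\log(2\pi) + O(1/j)$: since the parameter sums match, $(v/2-1/4)+(v/2+1/4)=0+v$, all leading contributions (the coefficient of $\log j$, the linear-in-$j$ terms, and the $\tfrac12\log(2\pi)$ constants) cancel, leaving an $O(1/j)=O(1/n)$ remainder. The only mildly delicate point is the uniformity of the Stirling error, but this is routine since $j\to\infty$ at the rate $j\asymp n$ and $v$ is bounded; I do not foresee any genuine obstacle. Combining the three steps yields the stated formula.
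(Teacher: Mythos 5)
Your proposal is correct and follows essentially the same route as the paper: both reduce the lemma, via Lemma \ref{kp}(1) and the bound $\int_t^\infty O(y^{-1})y^{2j+v-3/2}e^{-y}dy=O(t^{-1})\int_t^\infty y^{2j+v-3/2}e^{-y}dy$, to showing $\sqrt{2/\pi}\,Z_j=\Gamma(2j+v-1/2)(1+O(n^{-1}))$ uniformly on $n-j_n\le j\le n$, and then verify this from a closed form for $Z_j$ by Stirling. The only differences are cosmetic and both check out: you derive $Z_j=2^{2j+v-2}\Gamma(j)\Gamma(j+v)$ from the Mellin--Bessel integral (equivalent, via duplication, to the Akemann formula \eqref{zjnew} the paper cites) and you use Legendre duplication on $\Gamma(2j+v-1/2)$ so that the constants cancel exactly and only the ratio $\Gamma(j+v/2-1/4)\Gamma(j+v/2+1/4)/(\Gamma(j)\Gamma(j+v))=1+O(1/n)$ remains, which is a slightly tidier bookkeeping than the paper's direct Stirling expansion.
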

uniformly on $n-j_n\le j\le n$ and $t>0$ large enough.  

\begin{proof} It suffices to prove 
$$\sqrt{\frac2{\pi}} Z_{j}=\Gamma(j+v-1/2)(1+O(n^{-1})).$$
It was proved in \cite{Akemann} that 
\begin{equation}\label{zjnew} Z_j=\frac{\sqrt{\pi}\,\Gamma(2j+2v)\,\Gamma(j)}{2^{v+1}\Gamma(j+v+1/2)}\end{equation}	
for any $j\ge 1$ and $v\ge 0.$
Hence, when $j$ is sufficiently large and $v$ is bounded, the Stirling formula $$\Gamma(z)=z^{z-1/2} e^{-z} \sqrt{2\pi}(1+O(z^{-1}))$$ for $z$ large enough implies the following asymptotic 
 	$$\aligned 
 	\sqrt{\frac2{\pi}} \frac{Z_{j}}{\Gamma(2j+v-1/2)}&=\frac{\Gamma(2j+2v)\,\Gamma(j)}{2^{v+1/2}\Gamma(j+v+1/2)\Gamma(2j+v-1/2)}\\
 	&=\frac{(2j+2v)^{2j+2v-1/2} j^{j-1/2}}{2^{v+1/2}(j+v+1/2)^{j+v}(2j+v-1/2)^{2j+v-1}}(1+O(n^{-1}))\\
 	&=\frac{(1+\frac{v}{j})^{2j+2v-1/2} }{(1+\frac{v+1/2}{j})^{j+v}(1+\frac{v-1/2}{2j})^{2j+v-1}}(1+O(n^{-1}))
 	\endaligned $$
 	for all $n-j_n\le j\le n.$ 
 	Since $j=n(1+o(1))$ and $v$ is bounded, we get in further by Taylor's formula of $\log(1+t)$ for $|t|$ small enough that 
 	$$\aligned &(2j+2v-1/2)\log (1+\frac{v}{j})-(j+v)\log (1+\frac{v+1/2}{j})-(2j+v-1)\log (1+\frac{v-1/2}{2j})\\
 	&=(2j+2v-1/2)\frac{v}{j}-(j+v)\frac{v+1/2}{j}-(2j+v-1)\frac{v-1/2}{2j}+O(n^{-1})\\
 	&=O(n^{-1}).
 	\endaligned
 	$$
 	Therefore, 
 	$$ \sqrt{\frac2{\pi}} \frac{Z_{j}}{\Gamma(2j+v-1/2)}=e^{O(n^{-1})}(1+O(n^{-1}))=(1+O(n^{-1}))$$ 
 	uniformly on $n-j_n\le j\le n.$ The proof is completed. 
\end{proof}

Lemma \ref{vbexpression} gives us a hint that $2nY_j$ asymptotically obeys the Gamma distribution for the case when $v$ is bounded, which guides us to give the asymptotic for $\mathbb{P}(2n Y_j\ge u_n(x))$ via the central limit theorem of the Gamma distribution.   
\begin{lem}\label{vb} Suppose that $v$ is bounded. Given $\sqrt{n\log n}\ll j_n\ll n.$ 
 Then 
\begin{equation}\label{clt0}\mathbb{P}\left(2n Y_{n-k}>u_n(x)\right)\le\frac{1}{\widetilde{w}_n(k, x)} e^{-\frac{\widetilde{w}_n^2(k, x)}{3}}+O(n^{-1/2} \widetilde{w}_n^{-3}(k, x))
\end{equation}
uniformly on $0\le k\le j_n$ and  $x$ such that $1\ll\widetilde{w}_n(k, x).$ Here,
$$\widetilde{w}_n(k, x)=\frac{k}{\sqrt{s_n}}+a(s_n)+b(s_n) x.$$  
Furthermore, when $1\ll \widetilde{w}_n(k, x)=\tilde{O}(n^{\frac1{10}}),$ we have a precise asymptotic as
\begin{equation}\label{clt*}\mathbb{P}\left(2n Y_{n-k}>u_n(x)\right)=\frac{1+O(\widetilde{w}_n^{-2}(k, x))}{\sqrt{2\pi}\widetilde{w}_n(k, x)}e^{-\frac{\widetilde{w}_n^2(k, x)}{2}}.
\end{equation}
\end{lem}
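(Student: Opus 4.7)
The plan is to reduce the tail probability to an upper incomplete Gamma function and then apply Laplace-type asymptotics. By Lemma~\ref{vbexpression},
\[
\mathbb{P}(2nY_{n-k}>u_n(x)) = (1+O(u_n(x)^{-1}+n^{-1}))\,\frac{\Gamma(\alpha,u_n(x))}{\Gamma(\alpha)},
\]
where $\alpha:=2(n-k)+v-1/2$ and $\Gamma(\alpha,t):=\int_t^\infty y^{\alpha-1}e^{-y}\,dy$. A Taylor expansion using $2\sqrt{n(n+v)}=(2n+v)-v^2/(4n)+O(n^{-2})$, the identity $s_n(2n+v)=n(n+v)$, and $\sqrt{\alpha}=\sqrt{2n+v}\sqrt{1-2k/(2n+v)+O(1/n)}$ shows that $(u_n(x)-\alpha)/\sqrt{\alpha}$ equals $\widetilde{w}_n(k,x)$ up to errors negligible against the estimates sought, uniformly on $0\le k\le j_n\ll n$. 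What remains is to estimate the upper tail of a $\mathrm{Gamma}(\alpha)$ variable at threshold $\alpha+\sqrt{\alpha}\,\widetilde{w}_n$.

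For the precise asymptotic \eqref{clt*} in the range $\widetilde{w}_n=\tilde O(n^{1/10})$, I would substitute $y=\alpha+\sqrt{\alpha}\,u$ in $\Gamma(\alpha,u_n)$ and expand the logarithm of the integrand via $\log(1+u/\sqrt{\alpha})=u/\sqrt{\alpha}-u^2/(2\alpha)+u^3/(3\alpha^{3/2})-\cdots$. Combined with Stirling $\Gamma(\alpha)=\sqrt{2\pi}\,\alpha^{\alpha-1/2}e^{-\alpha}(1+O(1/\alpha))$, this produces
\[
\frac{\Gamma(\alpha,u_n)}{\Gamma(\alpha)} = \frac{1+O(1/\alpha)}{\sqrt{2\pi}}\int_{\widetilde{w}_n}^\infty e^{-u^2/2+\phi(u,\alpha)}\,du,
\]
where $\phi(u,\alpha)=-u/\sqrt{\alpha}+u^3/(3\sqrt{\alpha})+O(u^2/\alpha+u^4/\alpha)$. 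The assumption $\widetilde{w}_n=\tilde O(n^{1/10})$ is calibrated precisely so that $\widetilde{w}_n^3/\sqrt{\alpha}=O(\widetilde{w}_n^{-2})$, hence $e^{\phi}=1+O(\widetilde{w}_n^{-2})$ throughout the effective integration window $[\widetilde{w}_n,\widetilde{w}_n+O(1/\widetilde{w}_n)]$. Inserting the Mill's ratio expansion $\int_{\widetilde{w}}^\infty e^{-u^2/2}\,du=(e^{-\widetilde{w}^2/2}/\widetilde{w})(1+O(\widetilde{w}^{-2}))$ then yields \eqref{clt*}.

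For the upper bound \eqref{clt0}, which must hold over the wider range $1\ll\widetilde{w}_n$, I would use a Chernoff-style estimate. Writing $y=t+s$ with $s\ge 0$ and using $(1+s/t)^{\alpha-1}\le e^{(\alpha-1)s/t}$ (valid for $t\ge\alpha-1$) yields
\[
\Gamma(\alpha,t) \le \frac{t^\alpha e^{-t}}{t-\alpha+1}.
\]
Coupling with Stirling converts the right-hand side into $(1+O(1/\alpha))e^{-\alpha I(t/\alpha)}/(\sqrt{2\pi}\,\widetilde{w}_n)$ with $I(x):=x-1-\log x$. In the regime of interest the joint constraints $k\le j_n\ll n$ and the admissible sizes of $x$ guarantee $c:=\widetilde{w}_n/\sqrt{\alpha}\in(0,1/2)$ eventually. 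The elementary inequality $\log(1+c)\le c-c^2/2+c^3/3$ for $c\in[0,1]$ (obtained by truncating the alternating Taylor series after three terms, which gives an upper bound by Leibniz) yields $I(1+c)\ge c^2/2-c^3/3\ge c^2/3$ on $[0,1/2]$, hence $\alpha I(1+\widetilde{w}_n/\sqrt{\alpha})\ge\widetilde{w}_n^2/3$. Consequently $\Gamma(\alpha,t)/\Gamma(\alpha)\le(1/\widetilde{w}_n)e^{-\widetilde{w}_n^2/3}(1+O(1/\alpha))$, and the additive $O(n^{-1/2}\widetilde{w}_n^{-3})$ term in \eqref{clt0} absorbs the residual contributions of the multiplicative $(1+O(u_n^{-1}+n^{-1}))$ factor of Lemma~\ref{vbexpression}, the $O(1/\alpha)$ from Stirling, and the next-order Mill's ratio correction.

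The main obstacle is the careful book-keeping of the cubic Cramer correction $\widetilde{w}_n^3/\sqrt{\alpha}$: in \eqref{clt*} it dictates the precise cutoff $\widetilde{w}_n=\tilde O(n^{1/10})$, which is exactly the threshold below which this term fits into the multiplicative $O(\widetilde{w}_n^{-2})$ error, whereas in \eqref{clt0} the looser constant $1/3$ in place of $1/2$ is exactly the slack needed to fold the cubic back into the exponent via $\widetilde{w}_n^3/(3\sqrt{\alpha})\le\widetilde{w}_n^2/6$ on $\widetilde{w}_n\le\sqrt{\alpha}/2$, trading sharpness of the constant for a much wider regime of validity in $\widetilde{w}_n$.
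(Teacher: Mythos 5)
Your route is genuinely different from the paper's. The paper sandwiches $\mathrm{Gamma}(2j+v-1/2)$ between integer-parameter Gamma laws, views these as sums of i.i.d.\ exponentials, and then invokes two off-the-shelf results: the non-uniform Berry--Esseen bound of \cite{Stein} for \eqref{clt0} and Cram\'er's moderate deviation theorem from \cite{Petrov} for \eqref{clt*}. You instead work directly on the incomplete Gamma integral: a Laplace expansion after $y=\alpha+\sqrt{\alpha}\,u$ for \eqref{clt*}, and a Chernoff bound $\Gamma(\alpha,t)\le t^{\alpha}e^{-t}/(t-\alpha+1)$ combined with the rate-function inequality $I(1+c)\ge c^2/3$ for \eqref{clt0}. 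This is more self-contained, and your calibration remark is exactly right: $\widetilde{w}_n^{3}/\sqrt{\alpha}=O(\widetilde{w}_n^{-2})$ precisely at the $n^{1/10}$ threshold. For \eqref{clt*} you should also record that the $O(k/n)$ relative error in identifying $(u_n(x)-\alpha)/\sqrt{\alpha}$ with $\widetilde{w}_n(k,x)$ contributes a factor $e^{O(\widetilde{w}_n^{2}k/n)}$, which is $1+O(\widetilde{w}_n^{-2})$ only because $\widetilde{w}_n=\tilde{O}(n^{1/10})$ forces $k=\tilde{O}(n^{3/5})$; the paper makes this point explicitly, and it is the second place where the exponent $1/10$ is sharp. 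Modulo this and the standard control of the integrand outside the window where the expansion of $\log(1+u/\sqrt{\alpha})$ is valid, the \eqref{clt*} part is sound.

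The genuine gap is in \eqref{clt0}. You assert that the ``admissible sizes of $x$'' force $c=\widetilde{w}_n/\sqrt{\alpha}\in(0,1/2)$, but the lemma imposes no upper bound on $x$, and \eqref{clt0} is used downstream (via Lemma \ref{rough} and the estimate of ${\rm I\!I\!I}$ over $x\in(\ell_2(n),\infty)$) for arbitrarily large $\widetilde{w}_n$, in particular $\widetilde{w}_n\gg\sqrt{n}$. There the inequality $I(1+c)\ge c^2/3$ is false (already $I(2)=1-\log 2<1/3$), and the Chernoff exponent degrades to $\alpha I(1+c)\asymp\sqrt{\alpha}\,\widetilde{w}_n$, which is far smaller than $\widetilde{w}_n^{2}/3$; so your argument does not deliver the first term of \eqref{clt0} in that regime. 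The statement is still true, but for a different reason that you must supply: for $c\ge 1/2$ one has $I(1+c)\ge c_0\,c$ for an absolute $c_0>0$, hence $\alpha I(1+c)\ge c_0\sqrt{\alpha}\,\widetilde{w}_n\gg\log n+\log\widetilde{w}_n$, and the resulting bound is $O(n^{-1/2}\widetilde{w}_n^{-3})$, i.e.\ it is swallowed by the additive remainder. This case split is exactly the service the non-uniform Berry--Esseen remainder $O(n^{-1/2}t^{-3})$ performs automatically in the paper's proof. A smaller point: $I(1+c)\ge c^2/3$ is an equality at $c=1/2$, so it leaves no slack to absorb the $(1+O(k/n))$ distortion of the threshold near that endpoint; the paper avoids this by keeping the exponent in the form $\frac{\widetilde{w}_n^{2}}{2}(1+O(kn^{-1}))$ and only afterwards relaxing $1/2$ to $1/3$, and you should arrange your inequalities so that the same uniform $o(1)$ slack is available.
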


\begin{proof}
Lemma \ref{vbexpression} enables us to write  
\begin{equation}\label{2nyj} \aligned \mathbb{P}(2n Y_j>u_n(x))&=\frac{1+O(u^{-1}_n(x)+n^{-1})}{\Gamma(2j+v-1/2)}\int_{u_n(x)}^{\infty} y^{2j+v-3/2} e^{-y} dy \\
&=(1+O(u^{-1}_n(x)+n^{-1}))\mathbb{P}({\rm Gamma}(2j+v-1/2)>u_n(x))\endaligned
\end{equation}
uniformly on $n-j_n\le j\le n$ and  $x\ge -\ell_n$ with $\ell_n$ satisfying $1\ll u_n(-\ell_n).$  Here, we use the notation ${\rm Gamma}(2j+v-1/2)$ to represent a Gamma distribution with parameter $2j+v-1/2.$
For any $v\ge 0$ bounded, set $m=[v-1/2].$  
By the additivity of independent Gamma distributions, it follows 
$$\aligned\mathbb{P}({\rm Gamma}(2j+m)>u_n(x))&\le \mathbb{P}({\rm Gamma}(2j+v-1/2)>u_n(x))\\
&\le \mathbb{P}({\rm Gamma}(2j+m+1)>u_n(x)).
\endaligned$$

On the one hand, the non-uniform Berry-Esseen bounds ((8.1) in \cite{Stein}) guarantees 
$$\aligned &\quad \mathbb{P}\left(\frac{{\rm Gamma}(2j+\kappa)-(2j+k)}{\sqrt{2j+\kappa}}>t\right)=1-\Phi(t)+O(n^{-1/2} t^{-3})
\endaligned$$ for all $n-j_n\le j\le n$ and $\kappa$ bounded integer and $t$ large enough while $n$ is sufficiently large. 
Thereby, taking $\kappa=m+1$ and $\kappa=m,$ respectively, one gets  
$$\aligned &\quad \mathbb{P}\left(\frac{{\rm Gamma}(2j+v-1/2)-(2j+v-1/2)}{\sqrt{2j+v-1/2}}>t\right)\\
&\le \mathbb{P}\left(\frac{{\rm Gamma}(2j+m+1)-(2j+m+1)}{\sqrt{2j+m+1}}>t(1+O(n^{-1}))+O(n^{-1/2})\right)\\
&=1-\Phi\left(t+O(t n^{-1}+n^{-1/2})\right)+O(n^{-1/2} t^{-3})
\endaligned$$
and 
$$\aligned &\quad \mathbb{P}\left(\frac{{\rm Gamma}(2j+v-1/2)-(2j+v-1/2)}{\sqrt{2j+v-1/2}}>t\right)\\
&\ge \mathbb{P}\left(\frac{{\rm Gamma}(2j+m)-(2j+m)}{\sqrt{2j+m}}>t(1+O(n^{-1}))+O(n^{-1/2})\right)\\
&=1-\Phi\left(t+O(t n^{-1}+n^{-1/2})\right)+O(n^{-1/2} t^{-3}).
\endaligned$$ 
Consequently, it holds 
\begin{equation}\label{asymll} \aligned & \quad \mathbb{P}\left(\frac{{\rm Gamma}(2j+v-1/2)-(2j+v-1/2)}{\sqrt{2j+v-1/2}}>t\right)\\
&=1-\Phi\left(t+O(t n^{-1}+n^{-1/2})\right)+O(n^{-1/2} t^{-3})\endaligned \end{equation}
Having \eqref{2nyj} and \eqref{asymll} at hands and setting 
$$w_n(k, x):=\frac{u_n(x)-(2(n-k)+v-1/2)}{\sqrt{2(n-k)+v-1/2}}$$ for $0\le k\le j_n,$ 
 we can write 
$$\aligned &\quad \mathbb{P}(2n Y_{n-k}>u_n(x))\\
&=(1+O(n^{-1}+u_n^{-1}(k, x)))\mathbb{P}\left(\frac{{\rm Gamma}(2(n-k)+v-1/2)-(2(n-k)+v-1/2)}{\sqrt{2(n-k)+v-1/2}}>w_n(k, x)\right)\\
&=(1+O(n^{-1}+u_n^{-1}(k, x)))\left(1-\Phi\left(w_n(k, x)+O(w_n(k, x) n^{-1}+n^{-1/2})\right)\right)+O(n^{-1/2} w_n^{-3}(k, x)).
\endaligned$$
Simple calculus and the condition $\sqrt{n\log n}\ll j_n\ll n$ entails  
$$\aligned w_n(k, x)&=\frac{2\sqrt{n(n+v)}-(2n-2k+v-1/2)+\sqrt{2n+v}\,(a(s_n)+b(s_n) x) \,}{\sqrt{2n-2k+v-1/2}}\\
&=\left(\frac{k}{\sqrt{s_n}}+a(s_n)+b(s_n) x\right)(1+O(k\, n^{-1}))+O(n^{-1/2})\\
&=\left(\frac{k}{\sqrt{s_n}}+a(s_n)+b(s_n) x\right)(1+O(k\, n^{-1})).
\endaligned$$
Thus, with the notation of $\widetilde{w}_n(k, x),$
 one gets 
$$w_n(k, x)+O(w_n(k, x) n^{-1}+n^{-1/2})=\widetilde{w}_n(k, x)(1+O(k n^{-1}+n^{-1/2}\widetilde{w}_n(k, x)^{-1})).$$
Recall Mills ratio \begin{equation}\label{normal}1-\Phi(t)=\frac{1}{\sqrt{2\pi} \,t} e^{-\frac{t^2}{2}}(1+O(t^{-2}))\end{equation}
for $t>0$ large enough. Here, $\Phi$ is the distribution function of a standard normal. 

Eventually, uniformly on $n-j_n\le j\le n$ and $x\ge -\ell_n$, we have 
$$\aligned \mathbb{P}\left(2n Y_j>u_n(x)\right)&=\frac{1+O(k n^{-1}+\widetilde{w}_n^{-2}(k, x))}{\sqrt{2\pi}\widetilde{w}_n(k, x)} e^{-\frac{\widetilde{w}_n^2(k, x)}{2}\left(1+O( k n^{-1})\right)}+O(n^{-1/2} \widetilde{w}_n(k, x)^{-3})\\
&\le\widetilde{w}_n^{-1}(k, x)e^{-\frac{1}{3}\widetilde{w}_n^2(k, x)}+O(n^{-1/2} \widetilde{w}_n^{-3}(k, x))\endaligned
$$  when $n$ is sufficiently large. 

As far as \eqref{clt*} is concerned, we borrow Theorem 2 on page 217 of the book by Petrov \cite{Petrov}, which entails that 
$$\aligned \quad \mathbb{P}\left(\frac{{\rm Gamma}(2j+\kappa)-(2j+k)}{\sqrt{2j+\kappa}}>t\right)=(1+O((t^3+1) n^{-1/2}))(1-\Phi(t)) 
\endaligned$$
uniformly on $n-j_n\le j\le n$ and $0<t<\tau_n=o(n^{1/6}).$ 
The same argument for \eqref{asymll} implies 
\begin{equation}\label{something}\aligned &\quad \mathbb{P}(2n Y_{n-k}>u_n(x))\\
&=(1+O(n^{-1}))\mathbb{P}\left(\frac{{\rm Gamma}(2(n-k)+v-1/2)-(2(n-k)+v-1/2)}{\sqrt{2(n-k)+v-1/2}}>w_n(k, x)\right)\\
&=(1+O(\widetilde{w}_n^3(k, x)n^{-1/2}))\left(1-\Phi\left(w_n(k, x)+O(w_n(k, x) n^{-1}+n^{-1/2})\right)\right)\\
&=\frac{1}{\sqrt{2\pi}\widetilde{w}_n(k, x)}e^{-\frac{\widetilde{w}_n^2(k, x)}{2}\left(1+O( k n^{-1})\right)}\big(1+O(\widetilde{w}_n^3(k, x)n^{-1/2}+\widetilde{w}_n^{-2}(k, x)+k n^{-1})\big)\\
&=\frac{1}{\sqrt{2\pi}\widetilde{w}_n(k, x)}e^{-\frac{\widetilde{w}_n^2(k, x)}{2}}\big(1+O(\widetilde{w}_n^3(k, x)n^{-1/2}+\widetilde{w}_n^{-2}(k, x)+\widetilde{w}_n^2(k, x)k n^{-1})\big)\\
&=\frac{1+O(\widetilde{w}_n^{-2}(k, x))}{\sqrt{2\pi}\widetilde{w}_n(k, x)}e^{-\frac{\widetilde{w}_n^2(k, x)}{2}}.
\endaligned
\end{equation} 
Here, the last but second equality is true because that the condition $\widetilde{w}_{n}(k, x)=\tilde{O}(n^{\frac{1}{10}})$ implies $k=\tilde{O}(n^{3/5})$ and then $\widetilde{w}_{n}^2(k, x) \frac k n=o(1)$ 
and the last equality follows from 
$$O(\widetilde{w}_n^3(k, x)n^{-1/2}+\widetilde{w}_n^{-2}(k, x)+\widetilde{w}_n^2(k, x)k n^{-1})=O(\widetilde{w}_n^{-2}(k, x)).$$
The proof is then completed. 
\end{proof}

Reviewing the proof in \cite{L1WGinibre}, we give another simpler upper bound for $\mathbb{P}(2n Y_{n-k}\ge u_n(x))$ with $\widetilde{w}_n(k, x)\ge n^{1/10}(1+o(1)),$ which is rougher but enough for some estimates. Now $\widetilde{w}_n(k, x)\ge n^{1/10}$ ensures 
$\widetilde{w}_n(k, x)\gg \log \widetilde{w}_n(k, x)+\log n,$ which implies  
$$\widetilde{w}_n^2(k, x)\sqrt{n}\;e^{-\frac{\widetilde{w}_n^2(k, x)}{3}}\ll 1.$$ 
Therefore, \eqref{clt0} in Lemma \ref{vb} indicates the following result.

\begin{lem}\label{rough} For $x$ and $k$ such that $\widetilde{w}_n(k, x)\ge n^{\frac1{10}}(1+o(1)),$ we have 
$$\mathbb{P}(2n Y_{n-k}\ge u_n(x))=O\left(n^{-1/2} \widetilde{w}_{n}^{-3}(k, x)\right)$$ for sufficiently large $n.$	
\end{lem}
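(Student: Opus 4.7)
The plan is to invoke the upper bound \eqref{clt0} from Lemma \ref{vb}, which reads
$$\mathbb{P}\bigl(2nY_{n-k} > u_n(x)\bigr) \;\leq\; \widetilde{w}_n^{-1}(k,x)\,e^{-\widetilde{w}_n^2(k,x)/3} \;+\; O\!\left(n^{-1/2}\widetilde{w}_n^{-3}(k,x)\right),$$
and then to verify that in the regime $\widetilde{w}_n(k,x) \geq n^{1/10}(1+o(1))$ the Gaussian-type first summand is absorbed into the $O(n^{-1/2}\widetilde{w}_n^{-3}(k,x))$ error coming from the Berry--Esseen correction. In other words, the lemma is a direct corollary of Lemma \ref{vb} once one checks the correct ordering of the two summands in the stated regime.

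To carry this out, I would compute the ratio
$$\frac{\widetilde{w}_n^{-1}(k,x)\,e^{-\widetilde{w}_n^2(k,x)/3}}{n^{-1/2}\widetilde{w}_n^{-3}(k,x)} \;=\; \sqrt{n}\,\widetilde{w}_n^{2}(k,x)\,e^{-\widetilde{w}_n^2(k,x)/3}.$$
Taking logarithms gives $\tfrac{1}{2}\log n + 2\log \widetilde{w}_n(k,x) - \tfrac{1}{3}\widetilde{w}_n^2(k,x)$; since the hypothesis $\widetilde{w}_n(k,x) \geq n^{1/10}(1+o(1))$ yields $\widetilde{w}_n^2(k,x) \geq \tfrac{1}{2}n^{1/5}$ for $n$ large, the negative term of order at least $n^{1/5}/6$ dominates the polynomial correction $\tfrac{1}{2}\log n + 2\log \widetilde{w}_n(k,x)$. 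Hence the ratio tends to $0$, so the first summand is $o\!\left(n^{-1/2}\widetilde{w}_n^{-3}(k,x)\right)$ and may be absorbed, yielding the claim.

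There is no substantive obstacle here: the assertion is essentially the observation already foreshadowed in the paragraph preceding the statement, namely that the threshold $n^{1/10}$ was chosen precisely to ensure $\widetilde{w}_n^2(k,x)\sqrt{n}\,e^{-\widetilde{w}_n^2(k,x)/3} \ll 1$. The only care required is to keep the constant $1/3$ in the exponent (inherited from the $1+O(kn^{-1})$ factor in the proof of Lemma \ref{vb}) rather than $1/2$, so that the bound is valid uniformly in the range of admissible $k$.
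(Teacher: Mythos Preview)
Your argument is correct and is essentially identical to the paper's: both invoke the bound \eqref{clt0} from Lemma \ref{vb} and then observe that the hypothesis $\widetilde{w}_n(k,x)\ge n^{1/10}(1+o(1))$ forces $\sqrt{n}\,\widetilde{w}_n^{2}(k,x)\,e^{-\widetilde{w}_n^{2}(k,x)/3}\to 0$, so the Gaussian-type term is absorbed into the $O(n^{-1/2}\widetilde{w}_n^{-3}(k,x))$ error.
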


Next, we summarize some properties of $\widetilde{w}_n(k, x)$ in the following lemma, whose proof is exactly the same as that of Lemma 2.4 \cite{L1WGinibre} and is omitted here. 
\begin{lem}\label{wnkx} Assume the same condition as Lemma \ref{vb}. Given any $z_n$ such that $\widetilde{w}_n(k, z_n)$ is large enough. 
\begin{itemize}
\item[(1).] For any $m>1,$ it follows 
$$\int_{z_n}^{\infty}\widetilde{w}^{-m}_n(k, x)dx=\frac{\sqrt{\log s_n}}{m-1} \widetilde{w}^{-m+1}_n(k, z_n).$$ 	
\item[(2).] For any $c, m>0,$ we have 
$$\int_{z_n}^{\infty} \widetilde{w}^{-m}_n(k, x)e^{-c \,\widetilde{w}^2_n(k, x)}dx=\frac{\sqrt{\log s_n}}{2c\,\widetilde{w}_n^{m+1}(k, z_n)}\, e^{-c \,\widetilde{w}^2_n(k, z_n)}\left(1+O(\widetilde{w}_n^{-2}(k, z_n))\right)$$
\item[(3).] Given $c, m>0$ and $L\ge 0.$   Suppose $1\ll \widetilde{w}_n(L, x_n)\ll\sqrt{n}.$ We have 
$$\sum_{k=L}^{+\infty} \widetilde{w}_n^{-m}(k, z_n)e^{-c \widetilde{w}_n^2(k, z_n)}=\frac{\sqrt{s_n} e^{-c\widetilde{w}_n^2(L, z_n)}}{2 c\, \widetilde{w}_n^{(m+1)}(L, z_n)}(1+O(\widetilde{w}_n^{-2}(L, z_n)+\widetilde{w}_n(L, z_n)n^{-1/2})). $$ 
\item[(4).] Given $m>1$ and $L\ge 0$ such that $1\ll \widetilde{w}_n(L, x_n)$. Then 
$$\sum_{k=L}^{+\infty} \widetilde{w}_n^{-m}(k, z_n)=\frac{\sqrt{s_n}}{m-1}\widetilde{w}_n^{-m+1}(L, z_n)(1+O(\widetilde{w}_n^{-1}(L, z_n)n^{-1/2})).$$
\end{itemize}
\end{lem}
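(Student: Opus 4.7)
The proof of Lemma~\ref{wnkx} is essentially bookkeeping of elementary estimates, since $\widetilde{w}_n(k,x)$ is linear both in $x$ (with slope $b(s_n)=(\log s_n)^{-1/2}$) and in $k$ (with slope $1/\sqrt{s_n}$). The plan is to handle (1) and (2) by a single change of variable in the integral, and then to get (3) and (4) by comparing the discrete sums to the corresponding integrals via monotone Riemann-sum bounds.

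For parts (1) and (2), I would introduce the substitution $w=\widetilde{w}_n(k,x)$, so that $dx=\sqrt{\log s_n}\,dw$. Part (1) reduces to
$$\int_{z_n}^{\infty}\widetilde{w}_n^{-m}(k,x)\,dx=\sqrt{\log s_n}\int_{\widetilde{w}_n(k,z_n)}^{\infty}w^{-m}\,dw=\frac{\sqrt{\log s_n}}{m-1}\widetilde{w}_n^{-m+1}(k,z_n),$$
which is immediate. For part (2) the analogous substitution gives $\sqrt{\log s_n}\int_{a}^\infty w^{-m}e^{-cw^2}dw$ with $a=\widetilde{w}_n(k,z_n)$; writing $w^{-m}e^{-cw^2}=-\tfrac{1}{2c}w^{-m-1}\tfrac{d}{dw}(e^{-cw^2})$ and integrating by parts once yields
$$\int_{a}^{\infty}w^{-m}e^{-cw^2}\,dw=\frac{e^{-ca^2}}{2c\,a^{m+1}}-\frac{m+1}{2c}\int_{a}^{\infty}w^{-m-2}e^{-cw^2}\,dw,$$
and a second identical integration by parts shows the tail integral is $O(a^{-m-3}e^{-ca^2})$, which supplies the claimed $(1+O(a^{-2}))=(1+O(\widetilde{w}_n^{-2}(k,z_n)))$ factor.

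For parts (3) and (4) I would exploit the fact that $\widetilde{w}_n(k,z_n)=\widetilde{w}_n(L,z_n)+(k-L)/\sqrt{s_n}$, so the summand $g(w):=w^{-m}e^{-cw^2}$ (resp.\ $w^{-m}$) is evaluated on an arithmetic progression of step $1/\sqrt{s_n}$. Because $g$ is positive and decreasing for $w$ large, the standard monotone Riemann-sum sandwich gives
$$\int_{\widetilde{w}_n(L,z_n)}^{\infty}g(w)\,dw\ \le\ \frac{1}{\sqrt{s_n}}\sum_{k=L}^{+\infty}g(\widetilde{w}_n(k,z_n))\ \le\ \frac{g(\widetilde{w}_n(L,z_n))}{\sqrt{s_n}}+\int_{\widetilde{w}_n(L,z_n)}^{\infty}g(w)\,dw.$$
Plugging in the integral evaluation from (2) (respectively from (1)) on the right‑hand side, the main term is $\sqrt{s_n}/(2c)\cdot \widetilde{w}_n^{-m-1}(L,z_n)e^{-c\widetilde{w}_n^2(L,z_n)}$ (respectively $\sqrt{s_n}/(m-1)\cdot \widetilde{w}_n^{-m+1}(L,z_n)$), and the Riemann-sum discrepancy $g(\widetilde{w}_n(L,z_n))$ produces, after dividing by the main term, a relative error of order $\widetilde{w}_n(L,z_n)/\sqrt{s_n}=O(\widetilde{w}_n(L,z_n)n^{-1/2})$ in (3) and $\widetilde{w}_n^{-1}(L,z_n)/\sqrt{s_n}=O(\widetilde{w}_n^{-1}(L,z_n)n^{-1/2})$ in (4), using $s_n\sim n/2$.

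There is no real obstacle here: the identities are a direct consequence of linearity of $\widetilde{w}_n$ plus one integration by parts plus a monotone Riemann sandwich. The only mildly delicate point is checking that the remainders from the integration by parts in (2) and from the Riemann-sum comparison in (3) combine into the advertised $(1+O(\widetilde{w}_n^{-2}(L,z_n)+\widetilde{w}_n(L,z_n)n^{-1/2}))$ factor, which requires the standing hypothesis $\widetilde{w}_n(L,z_n)\ll\sqrt{n}$ to keep the second error genuinely small. Since the statement and derivation are identical to Lemma~2.4 of \cite{L1WGinibre}, I would simply invoke that reference rather than rewriting the calculation.
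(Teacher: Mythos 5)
Your proposal is correct and follows exactly the expected route: the paper omits the proof, stating it is identical to that of Lemma 2.4 in \cite{L1WGinibre}, and your reconstruction via the linear substitution $w=\widetilde{w}_n(k,x)$, one integration by parts for the Gaussian-weighted integral, and the monotone Riemann-sum sandwich (with the step $1/\sqrt{s_n}$ producing the $n^{-1/2}$ error terms) is precisely that argument. The error bookkeeping, including the role of the hypothesis $\widetilde{w}_n(L,z_n)\ll\sqrt{n}$ in part (3), is accurate.
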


\section{Proof of Theorem \ref{main} when $v$ is bounded}  
This section is devoted to the proof of Theorem \ref{main} for the case when $v$ is bounded. 
 
Remember  
$$\mathbb{P}(X_n\le x)=\mathbb{P}(2n Y_{(n)}\le u_{n}(x)),$$
where $$u_{n}(x)=2\sqrt{n(n+v)}+\sqrt{2n+v}(a(s_{n})+b(s_{n}) x)$$ 
and $s_n=\frac{n(n+v)}{2n+v}.$
Recall 
$$\widetilde{w}_{n}(k, x)=a(s_{n})+b(s_{n}) x+\frac{k}{\sqrt{s_n}}$$
and $a(y)=\sqrt{\log y}-\frac{\log(\sqrt{2\pi}\log y)}{\sqrt{\log y}}$ and $b(y)=(\log y)^{-1/2}.$   
By the closed formula of $W_1$ distance, we see 
$$W_1\left(\mathcal{L}(X_n), \Lambda\right)=\int_{-\infty}^{+\infty} |\mathbb{P}(2 nY_{(n)}\le x)-e^{-e^{-x}}|dx.$$ 
As the author did with Meng in \cite{L1WGinibre}, we still choose proper $\ell_1(n)$ and $\ell_2(n)$ with $1\ll l_1(n), l_2(n)\ll \log s_n $ to separate the integral into three parts as follows 
\begin{equation}\label{total}\aligned &\quad \int_{-\infty}^{+\infty} |\mathbb{P}(2 nY_{(n)}\le u_n(x))-e^{-e^{-x}}|dx\\
&=\int_{-\infty}^{-\ell_{1}(n)}|\mathbb{P}(2 nY_{(n)}\le u_n(x))-e^{-e^{-x}}|dx+\int^{\ell_2(n)}_{-\ell_{1}(n)}|\mathbb{P}(2 nY_{(n)}\le u_n(x))-e^{-e^{-x}}|dx\\
&\quad +\int_{\ell_{2}(n)}^{\infty}|\mathbb{P}(2 nY_{(n)}\le u_n(x))-e^{-e^{-x}}|dx\\ 
&=:{\rm I}+{\rm I\!I}+{\rm I\!I\!I}.
\endaligned 	
\end{equation}
 
The definitions of functions $a$ and $b$ as well as $s_{n}$  tell us that 
$$a(s_{n})=O(\sqrt{\log s_n}), \quad b(s_{n})=O((\log s_n)^{-1/2}),$$ which implies the infinity of $\widetilde{w}_n(k, x)$ for all $0\le k\le j_n$ and $x\ge-\ell_{1}(n).$
Thus, 
Lemma \ref{vb} is always available in this section.   

To obtain $$\int_{-\infty}^{+\infty} |\mathbb{P}(2 nY_{(n)}\le u_n(x))-e^{-e^{-x}}|dx=\frac{(\log\log s_n)^2}{2\log s_n}(1+o(1)),$$ 
it suffices to verify the following three facts:
\begin{enumerate}
\item ${\rm I}\ll \frac{(\log\log s_n)^2}{\log s_n};$
\item ${\rm I\!I}=\frac{(\log\log s_n)^2}{2\log s_n}(1+o(1));$
\item ${\rm I\!I\!I}\ll \frac{(\log\log s_n)^2}{\log s_n}.$	
\end{enumerate}
 
\subsection{Estimate on {\rm I}} 
Given two integers $m_1(n), m_2(n)\ge 2.$ Let $-x_0$ be the unique solution to $u_n(x)=0,$ 
which means $x_0=\log s_n+2\sqrt{s_n\log s_n}.$ The choice of $\ell_1(n)$  makes sure both $\mathbb{P}(2n Y_{(n)}\le u_n(x))$ and $e^{-e^{-x}}$ are around zero, which allows us to use the inequality $|t-s|\le |t|+|s|$ without any hesitation. 
Now $$\mathbb{P}(2n Y_j\le u_n(x))=0, \quad x\le x_0,$$ we see with the notation $\alpha_n=\frac{a(s_n)}{b(s_n)}$ that 
$$\aligned {\rm I}&=\int_{-\infty}^{-\ell_{1}(n)} |\mathbb{P}(2n Y_{(n)}\le u_n(x))-e^{-e^{-x}}|dx\\
&\le \int_{-\infty}^{-\ell_{1}(n)} e^{-e^{-x}}dx+\int_{-x_0}^{-\alpha_n} \mathbb{P}(2n Y_{(n)}\le u_n(x)) dx+\int_{-\alpha_n}^{-\ell_{1}(n)} \mathbb{P}(2n Y_{(n)}\le u_n(x)) dx\\
&\le \int_{e^{\ell_{1}(n)}}^{\infty} t^{-1} e^{-t} dt+x_0 \prod_{k=0}^{m_1(n)}\mathbb{P}\big(2n Y_{n-k}\le u_n\big(-\alpha_n\big)\big)+\alpha_n\prod_{k=0}^{m_2(n)}\mathbb{P}(2n Y_{n-k}\le u_n(-\ell_1(n))) \\
&\le e^{-\ell_1(n)-e^{\ell_1(n)}}+x_0\mathbb{P}^{m_1(n)}\big(2n Y_{n-m_1(n)}\le u_n\big(-\alpha_n\big)\big)\\
&\quad \quad \quad \quad\quad\quad \quad+\log s_n\, \exp\big\{-\sum_{k=0}^{m_2(n)}\mathbb{P}(2n Y_{n-k}\ge u_n(-\ell_1(n)))\big\}, \endaligned  $$
 where for the last inequality we use the facts $\alpha_n<\log s_n$ and $\log (1-t)<-t$ and the monotonicity of $\mathbb{P}(2n Y_k\le  u_n(x))$ on $x$ and on $k,$ which was verified in \cite{JQ}.
Choose particularly \begin{equation}\label{elln}\ell_1(n)=\log(2\log\log s_n),\end{equation} and $m_2(n)$ such that 
$$\widetilde{w}_n(m_2(n), -\ell_1(n))=2\sqrt{\log s_n}+O(s_n^{-1/2}).$$ It follows $$\aligned \widetilde{w}_{n}(0, -\ell_1(n))&=\sqrt{\log s_n}\big(1+o(1)\big);\\
\widetilde{w}_{n}^2(0, -\ell_1(n))&=\log s_n-2(\ell_1(n)+\log(\sqrt{2\pi}\log s_n))+o(1).\endaligned $$ 
This, together with the help of Lemmas \ref{vb} and \ref{wnkx}, implies that  
  $$\aligned &\quad \sum_{k=0}^{m_2(n)}\mathbb{P}(2n Y_{n-k}\ge u_n(-\ell_1(n)))\\
  &=\sum_{k=0}^{m_2(n)}\frac{1+o(1)}{\sqrt{2\pi} \widetilde{w}_{n}(k, -\ell_1(n))}e^{-\frac{\widetilde{w}_{n}^2(k, -\ell_1(n))}{2}}\\
  &=\frac{(1+o(1))\sqrt{s_n}}{\sqrt{2\pi}\widetilde{w}_{n}^2(0, -\ell_1(n))}e^{-\frac{\widetilde{w}_{n}^2(0, -\ell_1(n))}{2}}-\frac{(1+o(1))\sqrt{s_n}}{\sqrt{2\pi}\widetilde{w}_{n}^2(m_2(n)+1, -\ell_1(n))}e^{-\frac{\widetilde{w}_{n}^2(m_2(n)+1, -\ell_1(n))}{2}}\\
  &=2\log\log s_n+O(n^{-3/2}(\log n)^{-1}).\endaligned$$  
This leads to 
 $$\aligned 
\log s_n\, \exp\left\{-\sum_{k=0}^{m_2(n)}\mathbb{P}(2n Y_{n-k}\ge u_n(-\ell_1(n)))\right\}&=\exp\{-\log\log s_n+O(n^{-3/2}(\log n)^{-1})\}\\
&=\frac{1}{\log s_n}\left(1+o\left(1\right)\right). 
\endaligned $$
For the probability 
$\mathbb{P}\big(2n Y_{n-m_1(n)}\le u_n\big(-\alpha_n\big)\big),$ the second expression in \eqref{something} shows 
$$\aligned &\quad \mathbb{P}\big(2n Y_{n-m_1(n)}\ge u_n\big(-\alpha_n\big)\big)\\
&=(1+O(n^{-1/2} \widetilde{w}_n^3(m_1(n),-\alpha_n)+m_1(n) n^{-1}))\\
&\quad \times \left(1-\Phi\left(w_n(m_1(n), -\alpha_n)+O(w_n(m_1(n), -\alpha_n) n^{-1}+n^{-1/2})\right)\right).
\endaligned $$
Choose $m_1(n)=[\sqrt{s_n}]$ to make $\widetilde{w}_{n}(m_1(n), -\alpha_n)=1+O(s_n^{-1/2}).$ As a consequence, it holds 
$$\aligned \mathbb{P}\big(2n Y_{n-m_1(n)}\ge u_n\big(-\alpha_n\big)\big)&=(1+O(n^{-1/2}))\left(1-\Phi\left(1+n^{-1/2})\right)\right)
\ge \frac13,
\endaligned $$
which implies 
$$\mathbb{P}^{m_1(n)}\big(2n Y_{n-m_1(n)}\le u_n\big(-\alpha_n\big)\big)\le (\frac23)^{m_1(n)}.$$
Combining all these estimates together, we see 
\begin{equation}\label{I} {\rm I}=\frac{1}{\log s_n}\left(1+o\left(1\right)\right)\ll \frac{(\log\log s_n)^2}{\log s_n}.
\end{equation} 

\subsection{The asymptotic on {\rm I\!I}} 
Now, we investigate the order of the second term {\rm I\!I}, which is 
$${\rm I\!I}=\int_{-\ell_1(1)}^{\ell_2(n)} |\mathbb{P}(2n Y_{(n)}\le u_n(x))-e^{-e^{-x}}| dx$$ and then 
$${\rm I\!I}=\int_{-\ell_1(1)}^{\ell_2(n)} |e^{\mu_n(x)}-e^{-e^{-x}}| dx,$$
where $$\mu_n(x)=\sum_{k=1}^n\log \mathbb{P}(2n Y_k\le u_n(x)).$$ 
 If we are able to prove  
\begin{equation}\label{munx}
\mu_n(x)+e^{-x}=e^{-x}q_n(x)=o(1) \quad \text{uniformly on} \, x\in (-\ell_1(n), \ell_2(n)),
\end{equation}
we can have via Taylor's formula that  
\begin{equation}\label{iilower}\aligned 
{\rm I\!I}=\int_{-\ell_1(n)}^{\ell_2(n)} e^{-e^{-x}}| e^{\mu_n(x)+e^{-x}}-1|\,dx=(1+o(1))\int_{-\ell_1(n)}^{\ell_2(n)}e^{-e^{-x}-x}|q_n(x)|dx.
\endaligned 	
\end{equation}
Now we are going to explore what $q_n(x)$ is. Indeed, since $\log \mathbb{P}(2n Y_k\le u_n(x))$ is decreasing on $k$ and is non-positive,  it holds naturally with $j_n=[n^{3/5}]$  that
$$ \aligned \mu_n(x)&\ge \sum_{k=0}^{j_n-1} \log \mathbb{P}(2n Y_{n-k}\le u_n(x))+(n-j_n)\log \mathbb{P}(2n Y_{n-j_n}\le u_n(x));\\
\mu_n(x)&\le \sum_{k=0}^{j_n-1} \log \mathbb{P}(2n Y_{n-k}\le u_n(x)).
\endaligned$$ 

We first claim that $$\mathbb{P}(2nY_k\ge u_n(x))=o(s_n^{-1/3})$$ for any $1\le k\le n,$ which is correct once 
 $$\mathbb{P}(2nY_n\ge u_n(x))=o(s_n^{-1/3}).$$ Indeed, Lemma \ref{clt0} says 
 $$\aligned \mathbb{P}(2nY_n\ge u_n(x))&=\frac{1+o(1)}{\sqrt{2\pi}\widetilde{w}_n(0, x)} e^{-\frac{\widetilde{w}_n^2(0, x)}{2}}.
 \endaligned $$
 We first check the precise form of $\widetilde{w}_n(0, x).$ By definition, 
$$\widetilde{w}_n(0, x)=\sqrt{\log s_n}+\frac{x-\ell_2(n)}{\sqrt{\log s_n}}=\sqrt{\log s_n}-t_n(x),$$ 
where for simplicity $$t_n(x):=\frac{\ell_2(n)-x}{\sqrt{\log s_n}}$$ and then  
\begin{equation}\label{expwn0x} \aligned e^{-\frac12\widetilde{w}_n(0, x)^2}&=\frac{\sqrt{2\pi} \log s_n}{\sqrt{s_n}} e^{-x} e^{-\frac{t_n^2(x)}{2}}.
\endaligned \end{equation}
Thereby, it follows 
 $$\aligned \mathbb{P}(2nY_n\ge u_n(x))&=O\big(\frac{\sqrt{\log s_n}}{\sqrt{s_n}}e^{-x}\big)=o(s_n^{-1/3})
 \endaligned $$
 uniformly on $x\in (-\ell_1, \ell_2(n)).$
 Thus, use the Taylor expansion to obtain
$$ \aligned \log \mathbb{P}(2n Y_{n-k}\le u_n(x))&=\log(1- \mathbb{P}(2n Y_{n-k}\ge u_n(x))) \\
&=-\mathbb{P}(2n Y_{n-k}\ge u_n(x))(1+o(s_n^{-1/3})),\endaligned$$ 
which ensures that $\mu_n(x)$ satisfies 
$$\aligned \mu_n(x)&\ge (-\sum_{k=0}^{j_n-1} \mathbb{P}(2n Y_{n-k}> u_n(x))-(n-j_n)\mathbb{P}(2n Y_{n-j_n}> u_n(x)))(1+o(s_n^{-1/3}));\\
\mu_n(x)&\le-\sum_{k=0}^{j_n-1} \mathbb{P}(2n Y_{n-k}> u_n(x)).\endaligned$$

Now $j_n=[n^{3/5}],$ so that $\widetilde{w}_n(j_n, x)=n^{1/10}+O(\sqrt{\log s_n})$ and $\widetilde{w}_n(k, x)=\tilde{O}(n^{\frac1{10}})$ uniformly on $x\in (-\ell_1(n), \ell_2(n))$ and $0\le k\le j_n.$ 
Lemma \ref{clt0} tells 
\begin{equation}\label{un1}\aligned n\mathbb{P}(2n Y_{n-j_n}>u_{n}(x))
&=\frac{n(1+o(1))}{\sqrt{2\pi}\widetilde{w}_{n}(j_n, x)} e^{-\frac{\widetilde{w}_{n}^{2}(j_n, x)}{2}}=o(n^{\frac{9}{10}} e^{-\frac13 n^{\frac{1}{5}}}).
\endaligned \end{equation}

Setting $$\aligned \lambda_n(x)&= \sum_{k=0}^{j_n-1} \mathbb{P}(2n Y_{n-k}\ge u_n(x)), \endaligned $$
 Lemma \ref{clt0} continues to ensure  
\begin{equation}\label{lambdax1}\aligned \lambda_n(x)=\sum_{k=0}^{j_n-1}\frac{1+O(\widetilde{w}_n^{-2}(k, x))}{\sqrt{2\pi} \widetilde{w}_n(k, x)}e^{-\frac{1}{2} \widetilde{w}_n^2(k, x)},\endaligned\end{equation} 
which with Lemma \ref{wnkx} and \eqref{expwn0x} helps us to get 
$$\aligned \lambda_n(x)&=\frac{\sqrt{s_n}(1+O((\log s_n)^{-1}))}{\sqrt{2\pi}\widetilde{w}_n^2(0, x)} e^{-\frac{1}{2} \widetilde{w}_n^2(0, x)}-\frac{\sqrt{s_n}(1+o(1))}{\sqrt{2\pi}\widetilde{w}_n^2(j_n, x)} e^{-\frac{1}{2} \widetilde{w}_n^2(j_n, x)}\\
&=e^{-\frac{t_n^2(x)}{2}-x}(1-\frac{t_n(x)}{\sqrt{\log s_n}})^{-2}(1+O((\log s_n)^{-1})).\endaligned 
$$
The fact $t_n(x)=O\left(\frac{\ell_2(n)}{\sqrt{\log s_n}}\right)=o(1)$ implies $\frac{t_n(x)}{\sqrt{\log s_n}}\ll t_n^2(x),$ which allows us to apply the Taylor formula on $e^{-\frac{t_n^2(x)}{2}}$ and to write $$(1-\frac{t_n(x)}{\sqrt{\log s_n}})^{-2}=1+O(\frac{\ell_2(n)}{\log s_n})$$ and then we have 
\begin{equation}\label{un2}\aligned \lambda_n(x)&=e^{-x}(1-\frac{t_n^2(x)}{2}+O(t_n^{4}(x)))(1+O(\frac{\ell_2(n)}{\log s_n}))\\
&=e^{-x}(1-\frac{t_n^2(x)}{2})(1+O(\frac{\ell_2(n)}{\log s_n})).\endaligned \end{equation} 
The fact $$e^{-x}\ge e^{-\ell_2(n)}=O((\log s_n)^{-1})\gg n^{9/10}e^{-\frac{1}{3} n^{1/5}} \log s_n$$
and the two expressions \eqref{un1} and \eqref{un2} imply 
$$\mu_x=-\lambda_n(x)(1+o(s_n^{-1/3}))=-e^{-x}(1-\frac{t_n^2(x)}{2})(1+O(\frac{\ell_2(n)}{\log s_n}))$$
and moreover 
$$\aligned \mu_n(x)+e^{-x}=-\frac{t_n^2(x)}{2}e^{-x}(1+o(1)). \endaligned $$
This roughly gives 
$$\aligned\mu_n(x)+e^{-x}&=e^{-x} O\big(t_n^2(x)\big)
\le e^{\ell_1(n)}O\left(\frac{(\log \log s_n)^2}{\log s_n}\right)=O\left(\frac{(\log \log s_n)^3}{\log s_n}\right)=o(1)\endaligned $$   
uniformly on $x\in (-\ell_1(n), \ell_2(n)),$ 
which in fact is the hidden reason why we chose $\ell_1(n)=\log(2\log \log s_n)$ and $\ell_2(n)=\log(\sqrt{2\pi}\log s_n).$ 
Applying \eqref{iilower} with $q_n(x)=-\frac{t_n^2(x)}{2}(1+o(1)),$ we finally have 
$$\aligned{\rm I\!I}&=\frac{1+o(1)}{2\log s_n}\int_{-\ell_1(n)}^{\ell_2(n)} e^{-e^{-x}} e^{-x}|\ell_2(n)-x|^2 dx.  
\endaligned $$ 
By the substitution $t=e^{-x}$, it follows 
\begin{equation}\label{inteii}\aligned &\quad \int_{-\infty}^{+\infty} e^{-e^{-x}} e^{-x} (\ell_2(n)-x)^2 dx\\&=\int_{0}^{\infty} e^{-t}(\ell_2(n)+\log t)^2 dt \\
&=\ell_2^2(n)\int_{0}^{\infty} e^{-t}dt+2\ell_2(n)\int_{0}^{\infty} e^{-t}\log t dt+\int_{0}^{\infty} e^{-t}\log^2 t dt\\
&=\ell_2^2(n)(1+o(1)).
\endaligned \end{equation} 
Here, the last equality holds because the last two integrals in the third expression of \eqref{inteii} converge and $\ell_2(n)$ is large enough. 

Thus, it follows 
$$\int_{-\ell_1(n)}^{\ell_2(n)} e^{-e^{-x}} e^{-x}|\ell_2(n)-x|^2 dx=\ell_2^2(n)(1+o(1))$$
since both $\ell_1(n)$ and $\ell_2(n)$ tend to the infinity and then 
$$\aligned{\rm I\!I}&=\frac{(1+o(1))\ell_2^2(n)}{2\log s_n}=\frac{(\log \log s_n)^2}{2\log s_n}(1+o(1)).  
\endaligned $$

\subsection{Estimate on {\rm I\!I\!I}} 

Recall
$${\rm I\!I\!I}=\int_{\ell_2(n)}^{\infty}|\mathbb{P}(2n Y_{(n)}\le u_n(x))-e^{-e^{-x}}|dx=\int_{\ell_2(n)}^{\infty}|e^{\mu_n(x)}-e^{-e^{-x}}|dx.
$$
The elementary inequality $1-e^{-t}\le t$ entails   
$$
	{\rm I\!I\!I}\le\int_{\ell_2(n)}^{\infty}(-\mu_n(x))dx+\int_{\ell_2(n)}^{\infty}e^{-x} dx=e^{-\ell_2(n)}-\int_{\ell_2(n)}^{\infty}\mu_n(x)dx.
$$
Thus, it remains to prove that 
$$	\int_{\ell_2(n)}^{\infty}(-\mu_n(x))dx\ll \frac{(\log\log s_n)^2}{\log s_n},
$$
which is true if we are able to prove 
\begin{equation}\label{III01} 
	\int_{\ell_2(n)}^{\infty}\big(\sum_{k=0}^{j_n-1}\mathbb{P}(2nY_{n-k}\ge u_n(x))+(n-j_n) \mathbb{P}(2nY_{n-j_n}\ge u_n(x))\big)dx\ll \frac{(\log\log s_n)^2}{\log s_n}
\end{equation}
for some $\sqrt{n\log s_n}\ll j_n\ll n.$ 
We choose $j_n=[n^{3/4}\log n],$ which makes $\widetilde{w}_n(j_n, x)\ge n^{\frac{1}{10}}(1+o(1)).$
Thus Lemma \ref{rough} entails 
$$\aligned \mathbb{P}(2n Y_{n-j_n}>u_{n}(x))
&=O(n^{-1/2} \widetilde{w}_{n}^{-3}(j_n, x))\\
\endaligned  $$ 
and thanks to Lemma \ref{wnkx}, we have 
\begin{equation}\label{summe0}\aligned n \int_{\ell_2(n)}^{\infty}\mathbb{P}(2n Y_{n-j_n}>u_{n}(x))=O\left(\frac{\sqrt{n\log s_n} }{\widetilde{w}_{n}^2(j_n, \ell_2(n))}\right)=O(\frac{n^{3/2}\sqrt{\log n}}{j_n^2})=O((\log n)^{-3/2}).
\endaligned  \end{equation}

Meanwhile, with the help of Lemmas \ref{vb}, \ref{rough} and \ref{wnkx}, and choosing $p_n=n^{\frac1{10}}\sqrt{\log s_n},$ we have  
\begin{equation}\label{summe}\aligned &\quad \sum_{k=0}^{j_n-1}\int_{\ell_2(n)}^{\infty}\mathbb{P}(2n Y_{n-k}>u_{n}(x)) dx\\
&=\sum_{k=0}^{j_n-1}\int_{\ell_2(n)}^{p_n}\mathbb{P}(2n Y_{n-k}>u_{n}(x)) dx+\sum_{k=0}^{j_n-1}\int_{p_n}^{+\infty}\mathbb{P}(2n Y_{n-k}>u_{n}(x)) dx\\
&\le\sum_{k=0}^{j_n-1}\int_{\ell_2(n)}^{p_n} \widetilde{w}_{n}^{-1}(k, x) e^{-\frac{\widetilde{w}_{n}^2(k, x)}{2}}dx+O\left(\sum_{k=0}^{j_n-1}n^{-1/2} \int_{p_n}^{\infty}\widetilde{w}_{n}^{-3}(k, x) dx \right)\\
&=\frac{\sqrt{s_n\log s_n}}{\widetilde{w}_{n}^3(0, \ell_2(n))}e^{-\frac{\widetilde{w}_{n}^2(0, \ell_2(n))}{2}}+O\left(\frac{\sqrt{s_n\log s_n}}{\sqrt{n}\widetilde{w}_{n}(0, p_n)}\right)\\
&=O((\log s_n)^{-1}).
\endaligned  \end{equation}

Here, the validity of the last equality owes a big thank to the fact 
 $$\widetilde{w}_{n}(0, \ell_2(n))=\sqrt{\log s_n}.$$ 

The expressions \eqref{summe0} and \eqref{summe} confirm the estimate \eqref{III01} we want.

\section{Proof of Theorem \ref{main} when $v$ tends to the infinity.}
In this section, we are going to verify Theorem \ref{main} for the case when $v$ tends to the infinity. 

When we outline the proof of Theorem \ref{main} for $v$ bounded, we realize that the only thing need to do for the case when $v$  is infinite is to provide a similar lemma as \ref{vb}. 
As showed below, we obtain first 
$$\mathbb{P}(2n Y_{n-k}\ge u_n(x))=\frac{1+\tilde{O}(n^{-1})}{\sqrt{2\pi}}\int_{\widetilde{w}_n(k, x)(1+O(k/n))}^{\infty} e^{-v(\tau_j(\mu_j+\frac{\sqrt{2j+v}}{v} z))-\tau_j(\mu_j))} dz,
$$
where $k=n-j.$
Hence, according to the argument and all estimates made in the precedent section, we only need a precise expression as \eqref{clt*} for $\mathbb{P}(2n Y_{n-k}>u_n(x))$ when $1\ll \widetilde{w}_n(k, x)=\tilde{O}(n^{\frac{1}{10}})$ and meanwhile a proper, not necessarily sharp, upper bound  for $\mathbb{P}(2n Y_{n}>u_n(x))$ is enough once $n^{\frac1{10}}(1+o(1))<\widetilde{w}_n(k, x).$ 
Meanwhile, as for the upper bound of {\rm I}, we also need an additional lower bound for $\mathbb{P}(2n Y_{n-m_1(n)}\ge u_n(m_1(n), -\alpha_n)$ where $m_1(n)=[\sqrt{s_n}], \alpha_n=\frac{a(s_n)}{b(s_n)}.$ 

Hence, we present the following lemma.  
\begin{lem}\label{vi} Suppose that $\lim\limits_{n\to\infty} v=\infty.$ Given $j_n=o(n)$ and   recall $$\widetilde{w}_n(k, x)=\frac{k}{\sqrt{s_n}}+a(s_n)+b(s_n) x$$
for any $0\le k\le j_n.$  As $n$ large enough, we have four different estimates according to the range of $\widetilde{w}_n(k, x)$.  
\begin{itemize}
\item When $|\widetilde{w}_n(k, x)|\le M,$ 
$$\mathbb{P}(2n Y_{n-k}> u_n(x))\ge 1-\Phi(M+1). $$
\item If $ 1\ll \widetilde{w}_n(k, x)\le 2 n^{\frac{1}{10}},$
\begin{equation}\label{vie}
\mathbb{P}(2n Y_{n-k}>u_n(x))=\frac{1+O(\widetilde{w}_n^{-2}(k, x))}{\sqrt{2\pi} \widetilde{w}_n(k, x)} e^{-\frac{\widetilde{w}_n^2(k, x)}{2}}. \end{equation}
\item For the case $2n^{\frac{1}{10}}\sqrt{\log s_n}<x\le \sqrt{n\log s_n},$ 
\begin{equation}\label{viae1}\mathbb{P}(2n Y_{n}>u_n(x))\le 
2n^{-\frac{1}{10}}e^{-\frac{1}{12} \widetilde{w}_n^2(0, x)}.  \end{equation}
\item At last, when $x> \sqrt{n\log s_n},$
\begin{equation}\label{viae2}\mathbb{P}(2n Y_{n}>u_n(x))\le 
2n^{-1/2} e^{-\frac{1}{6} \sqrt{n} \widetilde{w}_n(0, x)}.
\end{equation}
 \end{itemize}
\end{lem}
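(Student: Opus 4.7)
The plan is to reduce $\mathbb{P}(2nY_{n-k}>u_n(x))$ to a Laplace-type integral and then run a saddle-point analysis, using Lemma~\ref{kp}(2) in the role Lemma~\ref{kp}(1) played in Lemma~\ref{vbexpression}. After the substitution $\tau=2ny/v$ in the density of $Y_j$ (with $j=n-k$) and application of the uniform asymptotic $K_v(v\tau)\sim\sqrt{\pi/(2v)}\,\tau^{-v}(1+\tau^2)^{-1/4}e^{-v\phi(\tau)}$, the density of $2nY_{n-k}$ is proportional, up to a factor $1+O(1/v)$, to $\tau^{2j-1}(1+\tau^2)^{-1/4}e^{-v\phi(\tau)}$ with $\phi(\tau)=\sqrt{1+\tau^2}-\log(1+\sqrt{1+\tau^2})$. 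A short calculus exercise places the unique maximum of the phase $(2j-1)\log\tau-v\phi(\tau)$ at $\mu_j=\sqrt{\tfrac{2j-1}{v}(2+\tfrac{2j-1}{v})}$ with $h''(\mu_j)=-v/s_j$ where $s_j=1+(2j-1)/v$, so the mode of $2nY_{n-k}$ is $v\mu_j=2\sqrt{j(j+v)}(1+O(1/n))$ and its Gaussian width is $\sqrt{vs_j}=\sqrt{2j+v}(1+O(1/n))$, matching the scaling encoded in $\widetilde w_n(k,x)$. Standardising via $z=(2nY_{n-k}-v\mu_j)/\sqrt{2j+v}$ and running the Taylor computation already used for $w_n(k,x)$ in Lemma~\ref{vb} identifies the lower integration bound as $\widetilde w_n(k,x)(1+O(k/n))$, producing the integral representation sketched in the excerpt.

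With that representation in hand, the moderate regime $1\ll\widetilde w_n(k,x)\le 2n^{1/10}$ follows by Taylor-expanding the exponent $v(\phi(\mu_j+\sqrt{2j+v}\,z/v)-\phi(\mu_j))$ around $z=0$: the quadratic term is $z^2/2$ identically, and a case split on $\phi'''(\mu_j)$ in the three subregimes $v\ll j$, $v\asymp j$, $v\gg j$ shows that the cubic remainder, evaluated at the effective $z\asymp\widetilde w_n(k,x)$, contributes $O(\widetilde w_n^{-2})$ precisely in the allowed range $\widetilde w_n(k,x)\le 2n^{1/10}$. Mills' ratio \eqref{normal} then yields \eqref{vie}. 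The bounded case $|\widetilde w_n(k,x)|\le M$ is immediate from the same representation together with $\int_{M+o(1)}^\infty e^{-z^2/2}\,dz/\sqrt{2\pi}>1-\Phi(M+1)$ for $n$ large.

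For the two tail regimes I would exploit the global convexity and monotonicity of $\phi$. When $2n^{1/10}\sqrt{\log s_n}<x\le\sqrt{n\log s_n}$, the displacement $\eta=\sqrt{2j+v}\,z/v$ is still small enough compared to $\mu_j$ that the quadratic lower bound $v(\phi(\mu_j+\eta)-\phi(\mu_j))\ge\tfrac{v}{6s_j}\eta^2$ persists throughout the integration range, and an explicit one-variable integration gives \eqref{viae1}. When $x>\sqrt{n\log s_n}$ the displacement $\eta$ becomes comparable to or larger than $\mu_j$, so I switch to a linear lower bound, $v(\phi(\mu_j+\eta)-\phi(\mu_j))\ge\tfrac{v}{6}\eta$ for large $\eta$, using the asymptotic $\phi'(\tau)\to 1$ as $\tau\to\infty$; this immediately produces the exponential-in-$\sqrt{n}\widetilde w_n(0,x)$ bound \eqref{viae2}.

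The main obstacle is keeping the cubic and quartic Taylor remainders, together with the $1+O(1/v)$ error inherited from Lemma~\ref{kp}(2), uniform in $k\le j_n$ across the three subregimes $v\ll j$, $v\asymp j$, $v\gg j$, so as to preserve the $(1+O(\widetilde w_n^{-2}))$ precision demanded by \eqref{vie}; the computation of $\phi'''(\mu_j)$ depends delicately on the relative size of $v$ and $j$, and it is this case analysis that drives the choice of breakpoints $2n^{1/10}\sqrt{\log s_n}$ and $\sqrt{n\log s_n}$ in the statement. Once the uniform saddle-point control is in place, the moderate and bounded cases become routine, and the two tail bounds reduce to one-variable integrations based on explicit convexity inequalities for $\phi$.
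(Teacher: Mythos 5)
Your overall strategy is the one the paper uses: pass to a Laplace-type integral via the uniform large-$v$ Bessel asymptotic (the paper imports this as identity (4.1) of \cite{MW24}, with phase $\tau_j(y)=\sqrt{1+y^2}-\log(1+\sqrt{1+y^2})+\frac{1}{4v}\log(1+y^2)-\frac{2j-1}{v}\log y$), recenter at an explicit near-critical point, standardise by $\sqrt{2j+v}/v$ to identify the lower limit $\widetilde w_n(k,x)(1+O(k/n))$, expand the phase to second order for the regime $\widetilde w_n\le 2n^{1/10}$, and use convexity for the two tail regimes. Your choice of the exact critical point of the leading phase versus the paper's $\mu_j=2\sqrt{j(j+v)}/v$ (chosen so that $1+\mu_j^2$ is exactly $(2j+v)^2/v^2$, at the cost of an explicitly controlled linear term $z\sqrt{2j+v}\,\tau_j'(\mu_j)=O(|z|n^{-1/2})$) is an immaterial difference, and the paper's second-derivative computation is uniform in $k$ without the three-way case split on $v$ versus $j$ that you anticipate.

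There is, however, a genuine gap in your tail-regime argument: you lower-bound only the increment of $v\phi$, i.e.\ you claim $v(\phi(\mu_j+\eta)-\phi(\mu_j))\ge\frac{v}{6s_j}\eta^2$, while the integrand still carries the factor $\tau^{2j-1}=(\mu_j+\eta)^{2j-1}$, which is increasing in $\eta$ and unbounded. The convexity must be applied to the \emph{full} phase $v\phi(\tau)-(2j-1)\log\tau$; indeed, when $v\ll j$ the term $(2j-1)/\tau^2$ supplies almost all of the curvature $|h''(\mu_j)|=v/s_j$ (the $v\phi''(\mu_j)$ contribution is smaller by a factor of order $v/j$), so the $\phi$-increment alone cannot deliver the rate $\frac{v}{6s_j}\eta^2$, and even where it could, the neglected factor $(1+\eta/\mu_j)^{2j-1}=e^{(2j-1)\log(1+\eta/\mu_j)}$ overwhelms $e^{-v\eta^2/(6s_j)}$ (already at $\eta\asymp\mu_j$ the product diverges like $e^{cj}$). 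Moreover no quadratic lower bound can ``persist throughout the integration range'': the full phase grows only linearly as $\eta\to\infty$. The paper's fix is to lower-bound the full second derivative by its logarithmic part, $\tau_j''(y)\ge j/(vy^2)$, integrate twice from $\mu_j$ to get $v\tau_j(\mu_j+\tfrac{\sqrt{2j+v}}{v}z)-v\tau_j(\mu_j)\ge j(\beta_jz-\log(1+\beta_jz))$ globally, integrate the resulting tail exactly, and only at the endpoint $a$ apply $x-\log(1+x)\ge x^2/3$ for $x\le1/2$ (giving \eqref{viae1}) or $\ge x/3$ for $x\ge1/2$ (giving \eqref{viae2}); this is where the breakpoint $x\asymp\sqrt{n\log s_n}$, i.e.\ $\beta_j a\asymp 1$, actually comes from, not from the behaviour of $\phi'''$. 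Your central-regime and bounded-regime arguments are sound and match the paper's.
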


Having this lemma at hands, the argument for Theorem \ref{main} when $v$ tends to infinity is ready. We give a quick review on the procedure.  
Indeed, \eqref{vie} taking the place of \eqref{clt*} with the same calculus guarantees that 
$$\aligned  
\int_{-\ell_1(n)}^{\ell_2(n)} |\mathbb{P}(2n Y_{(n)}\le u_n(x))-e^{e^{-x}}| dx&=\frac{(\log \log s_n)^2}{2\log s_n}(1+o(1)).
\endaligned $$ 
To verify the desired limit  
 $$\lim_{n\to\infty} \frac{(\log \log s_n)^2}{\log s_n}W_1(\mathcal{L}(X_n), \Lambda)=\frac12,$$ it suffices to prove that 
 \begin{equation}\label{finalineq}\rm{I}+\rm{I\!I\!I}\ll \frac{(\log\log s_n)^2}{\log s_n}.\end{equation}

We copy the upper bound on {\rm I}, which says 
\begin{equation} \label{boundI}\aligned {\rm I}
&\le e^{-\ell_1(n)-e^{\ell_1(n)}}+x_0\mathbb{P}^{m_1(n)}\big(2n Y_{n-m_1(n)}\le u_n\big(-a(s_n)/b(s_n)\big)\big)\\
&\quad \quad \quad \quad\quad+\log s_n\, \exp\big\{-\sum_{k=0}^{m_2(n)}\mathbb{P}(2n Y_{n-k}\ge u_n(-\ell_1(n)))\big\}. \endaligned  \end{equation}
Comparing with the jobs done in the precedent section, the first and the third terms in \eqref{boundI} have the same estimates as those related to the case when $v$ is bounded. The only thing to do is on the second one. 
The choice of $m_1(n)$ makes $\widetilde{w}_n(m_1(n), -\alpha_n)=1.$ Thus, the second assertion of Lemma \ref{vi} with $M=1,$ indicates   
$$\aligned \mathbb{P}\big(2n Y_{n-m_1(n)}>u_n(-\alpha_n))
&\ge 1-\Phi(2). \endaligned  $$ 
Hence $\mathbb{P}\big(2n Y_{n-m_1(n)}\le u_n(-\alpha_n)\big)\le \Phi(2),$
which guarantees that the second term in \eqref{boundI} is included by $o((\log\log s_n)^2/\log s_n).$ Thus 
$${\rm I}\ll\frac{\log\log s_n)^2}{\log s_n}.$$

To verify \eqref{finalineq}, it remains to confirm the last term 
$${\rm I\!I\!I}\ll \frac{\log\log s_n)^2}{\log s_n}.$$
Indeed, checking the treatment done on ${\rm I\!I\!I}$ in the third section, we have 
$$\aligned {\rm I\!I\!I}&\le (\log s_n)^{-1}+\int_{\ell_2(n)}^{\infty}\big(\sum_{k=0}^{j_n-1}\mathbb{P}(2nY_{n-k}\ge u_n(x))+(n-j_n) \mathbb{P}(2nY_{n-j_n}\ge u_n(x))\big)dx.
\endaligned $$ 
According to Lemma \ref{vi}, we cut the integral into three parts and 
use the simpler upper bound  
$$\sum_{k=0}^{j_n-1}\mathbb{P}(2nY_{n-k}\ge u_n(x))+(n-j_n) \mathbb{P}(2nY_{n-j_n}\ge u_n(x))\le n\mathbb{P}(2nY_n\ge u_n(x))$$
for the last two to get 
$$\aligned {\rm I\!I\!I}&\le (\log s_n)^{-1} +\int_{\ell_2(n)}^{2n^{\frac{1}{10}}\sqrt{\log s_n}}\left(n\mathbb{P}(2n Y_{n-j_n}>u_{n}(x)) +\sum_{k=0}^{j_n-1} \mathbb{P}(2n Y_{n-k}>u_{n}(x))\right)dx\\
&\quad+n\int_{2n^{\frac{1}{10}}\sqrt{\log s_n}}^{n^{1/2}\sqrt{\log s_n}}\mathbb{P}(2n Y_{n}>u_{n}(x))dx+n\int_{n^{1/2}\sqrt{\log s_n}}^{\infty}\mathbb{P}(2n Y_{n}>u_{n}(x))dx\\
&\le (\log s_n)^{-1} +\int_{\ell_2(n)}^{+\infty}\left(\frac{ne^{-\frac{\widetilde{w}_n^2(j_n, x)}{2}}}{\widetilde{w}_n(j_n, x)}+\sum_{k=0}^{j_n-1}\frac{e^{-\frac{\widetilde{w}_n^2(k, x)}{2}}}{\widetilde{w}_n(k, x)}\right)dx\\
&\quad+2n^{\frac 9{10}} \int_{2n^{\frac{1}{10}}\sqrt{\log s_n}}^{\infty}e^{-\frac1{12}\widetilde{w}_n^2(0, x)} dx+2n^{1/2}\int_{n^{1/2}\sqrt{\log s_n}}^{\infty}e^{-\frac{1}{6}\widetilde{w}_n(0, x) } dx.
\endaligned $$ 
Lemma \ref{wnkx} guarantees that  
$$\int_{\ell_2(n)}^{+\infty}\sum_{k=0}^{j_n-1}\frac{e^{-\frac{\widetilde{w}_n^2(k, x)}{2}}}{\widetilde{w}_n(k, x)}dx=O\left(\frac{\sqrt{s_n\log s_n}}{\widetilde{w}_n^3(0, \ell_2(n))}e^{-\frac12\widetilde{w}_n^2(0, \ell_2(n))}\right)=O(\frac{1}{\log s_n}),$$
whence, 
$${\rm I\!I\!I}\ll \frac{(\log\log s_n)^2}{\log s_n}.$$

 Thereby the proof is completed now. 

All left is to prove Lemma \ref{vi}. 

\begin{proof}[\bf Proof of Lemma \ref{vi}] 
We first do dome preparations on $\mathbb{P}(2n Y_{j}\ge t).$ 

The expression (4.1) in \cite{MW24} tells 
\begin{equation}\label{keyexm0} \mathbb{P}(2n Y_{j}\ge t)=\frac{2^{v+1/2}v^{2j+v-1/2}\Gamma(j+v+1/2)}{\,\Gamma(2j+2v)\,\Gamma(j)}\int_{t/v}^{+\infty}e^{-v\tau_j(y)} dy(1+O(v^{-1}))
\end{equation}
for any $t>0.$ Here, the function $\tau_j$ is defined as 
$$\tau_j(y)=\sqrt{1+y^2}-\log(1+\sqrt{1+y^2})+\frac{1}{4v}\log(1+y^2)-\frac{2j-1}{v}\log y, 
$$
which is strictly convex and whose first and second derivatives are $$\tau_j'(y)=\frac{y}{1+\sqrt{1+y^2}}+\frac{y}{2v(1+y^2)}-\frac{2j-1}{v \, y}$$ and 
$$\tau_j''(y)=\frac{1}{\sqrt{1+y^2}(1+\sqrt{1+y^2})}+\frac{1-y^2}{2 v(1+y^2)^2}+\frac{2j-1}{v y^2}.$$  
Denoted by $x_j$ the unique minimizer of $\tau_j,$ which is verified in \cite{MW24} satisfying 
$$2\sqrt{(j-1)(j+v-1)}\le v x_j\le 2\sqrt{j(j+v)}.$$ 
Although $x_j$ is the unique solution to $\tau_j'(x)=0,$ which is supposed to be the right point for $\tau_j$ to expand at (see \cite{JQ} for details on $x_j$ and $\tau_j''(x_j)$), limited by the lack of the analytical formula of $x_j$ we use $\mu_j:=\frac{2\sqrt{j(j+v)}}{v}$ to take the place of $x_j$ because $$1+\mu_j^2=\frac{(2j+v)^2}{v^2}.$$ 
Hence,  
$$\aligned v\tau_j\big(\mu_j)&=2j+v-v \log \frac{2(j+v)}{v}+\frac{1}{2}\log\frac{2j+v}{v}-(2j-1)\log\frac{2\sqrt{j(j+v)}}{v}\\
&=2j+v-(v+2j-1)\log 2+(v+2j-\frac32)\log v\\
&\quad -(v+j-\frac{1}{2})\log(j+v)-(j-\frac12)\log j
\endaligned $$
and 
$$\aligned \tau_j'\big(\mu_j\big)&=\frac{1}{2\sqrt{j(j+v)}}+\frac{\sqrt{j(j+v)}}{(2j+v)^2}.
\endaligned $$

We treat first the coefficient in \eqref{keyexm0}. Indeed, use the Stirling formula and the asymptotic  
$$\frac{(j+v+1/2)^{j+v}}{(j+v)^{j+v}}=e^{(j+v)\log(1+1/(2(j+v)))}=e^{1/2+O(n^{-1})}$$ to obtain 
$$\aligned \frac{2^{v+1/2}v^{2j+v-1/2}\Gamma(j+v+1/2)}{\,\Gamma(2j+2v)\,\Gamma(j)}
&=\frac{v^{2j+v-1/2}2^{v+1/2}(j+v+1/2)^{j+v}}{\sqrt{2\pi}(2j+2v)^{2j+2v-1/2} j^{j-1/2}}e^{2j+v-1/2}(1+O(n^{-1}))\\
&=\frac{v^{2j+v-1/2} e^{2j+v}}{\sqrt{2\pi} 2^{2j+v-1}(j+v)^{j+v-1/2} j^{j-1/2}} (1+O(n^{-1})),
\endaligned $$
which is recognized to be $$\frac{(1+O(n^{-1}))v}{\sqrt{2j+v}\sqrt{2\pi}}\exp\{v\tau_j\big(\mu_j\big)\}.$$ 
Thus, we rewrite \eqref{keyexm0} as 
$$\mathbb{P}(2n Y_{j}\ge t)=\frac{v(1+\tilde{O}(n^{-1}))}{\sqrt{2j+v}\sqrt{2\pi}}\int_{t/v}^{+\infty}e^{-v(\tau_j(y)-\tau_j(\mu_j))} dy,
$$  
whence it follows from the substitution $y=\mu_j+\frac{\sqrt{2j+v}}{v} z$ that 
$$\aligned\mathbb{P}(2n Y_{j}\ge u_n(x))&=\frac{1+\tilde{O}(n^{-1})}{\sqrt{2\pi}}\int_{\frac{u_n(x)-v\mu_j}{\sqrt{2j+v}}}^{\infty} e^{-v(\tau_j(\mu_j+\frac{\sqrt{2j+v}}{v} z))-\tau_j(\mu_j))} dz.
\endaligned $$
As far as the term $\frac{u_n(x)-v\mu_j }{\sqrt{2j+v}}$ for $n-j_n\le j\le n$ is concerned, with  $k=n-j,$ it is easy to see that 
$$\aligned &\quad \frac{u_n(x)-2\sqrt{(n-k)(n-k+v)}}{\sqrt{2(n-k)+v}}\\
 &=\frac{2\sqrt{n(n+v)}-2\sqrt{(n-k)(n-k+v)}}{\sqrt{2(n-k)+v}}+\frac{\sqrt{2n+v}}{\sqrt{2(n-k)+v}}(a(s_n)+b(s_n)\, x)\\
 &=\widetilde{w}_n(k, x)(1+O(\frac k n)).
\endaligned $$
This implies in further that 
\begin{equation}\label{exp2}\aligned\mathbb{P}(2n Y_{j}\ge u_n(x))&=\frac{1+\tilde{O}(n^{-1})}{\sqrt{2\pi}}\int_{\widetilde{w}_n(k, x)(1+O(k/n))}^{\infty} e^{-v(\tau_j(\mu_j+\frac{\sqrt{2j+v}}{v} z))-\tau_j(\mu_j))} dz.
\endaligned \end{equation}

Now, we dig the integrand in \eqref{exp2}. In fact, apply Taylor's formula to see 
\begin{equation}\label{sumtay}\aligned v\tau_j(\mu_j+\frac{\sqrt{2j+v}}{v} z)-v\tau_j(\mu_j)&=z\sqrt{2j+v }\, \tau_j'(\mu_j)+\frac{z^2(2j+v) }{2 v} \tau_j''(\mu_j+\frac{\sqrt{2j+v}}{v}\theta)
\endaligned \end{equation}
for some $\theta\in (0, z).$
Considering the expression of $\tau_j'(\mu_j),$ we know  
$$z\sqrt{2j+v }\tau_j'(\mu_j)=O(|z|(j^{-1/2}+j^{1/2}(2j+v)^{-1}))=O(|z|n^{-1/2}).$$
For $\tau_j''(\mu_j+\frac{\sqrt{2j+v}}{v} \theta),$  we need to simplify first the form of $$1+(\mu_j+\frac{\sqrt{2j+v}}{v} \theta)^2.$$ Now $ \mu_j=2\sqrt{j(j+v)}/v,$ it follows  
$$\aligned 1+(\mu_j+\frac{\sqrt{2j+v}}{v} \theta)^2&=1+\mu_j^2+\frac{2\mu_j\sqrt{2j+v}}{v}\theta+\frac{2j+v}{v^2}\theta^2\\
&=\frac{(2j+v)^2}{v^2}\left(1+\frac{4\sqrt{j(j+v)}\theta }{(2j+v)^{3/2}}+\frac{\theta^2}{2j+v}\right)\\
&=\frac{(2j+v)^2}{v^2}(1+O(\frac{\theta^2}{2n+v}))
\endaligned $$
for any $0<\theta\ll n^{1/2}, $ whence  
$$\sqrt{1+(\mu_j+\frac{\sqrt{2j+v}}{v} \theta)^2}=\frac{2j+v}{v}(1+O(\frac{\theta^2}{2n+v})).$$ 
Thereby, it follows from some simple calculus that 
$$\aligned\tau_j''(\mu_j+\frac{\sqrt{2j+v}}{v} \theta)&=\frac{1}{\sqrt{1+y^2}(1+\sqrt{1+y^2})}+\frac{1-y^2}{2 v(1+y^2)^2}+\frac{2j-1}{v y^2}\\
&=(\frac{v^2}{2(2j+v)(j+v)}+\frac{v}{2(j+v)})\big(1+O\big(\frac{\theta^2}{2n+v}\big)\big)\\
&=\frac{v}{2j+v}\big(1+O\big(\frac{\theta^2}{2n+v}\big)\big). \endaligned$$ 
Thus, we have 
$$\aligned v\tau_j(\mu_j+\frac{\sqrt{2j+v}}{v} z)-v\tau_j(\mu_j)&=\frac{z^2}{2}\big(1+O\big(\frac{z^2}{2n+v}\big)\big)+O(|z|n^{-1/2})
\endaligned $$ 
and then 
\begin{equation}\label{expre*} v\tau_j(\mu_j+\frac{\sqrt{2j+v}}{v} z)-v\tau_j(\mu_j)=\frac{z^2}{2}+o(1)
\end{equation} 
once $|z|\ll n^{1/4}.$ 
Now $\tau_j''(y)\ge \frac{j}{vy^2},$ which with Taylor's formula, implies 
$$\aligned \tau_j(\mu_j+\frac{\sqrt{2j+v}}{v} z)-\tau_j(\mu_j)&=\frac{2j+v}{v^2}\int_0^z\int_0^{t}\tau_j''(\mu_j+\frac{\sqrt{2j+v}}{v} s) ds dt\\
&\ge \frac{(2j+v)j}{v^3} \int_0^z\int_0^{t}(\mu_j+\frac{\sqrt{2j+v}}{v} s)^{-2} ds dt\\
&=\frac{j}{v}\big(\frac{\sqrt{2j+v} z}{2\sqrt{j(j+v)}}-\log\big(1+\frac{\sqrt{2j+v} z}{2\sqrt{j(j+v)}}\big ) \big).
\endaligned $$
Setting for simplicity $\beta_j=\frac{\sqrt{2j+v}}{2\sqrt{j(j+v)}},$ one obtains 
$$\aligned v\tau_j(\mu_j+\frac{\sqrt{2j+v}}{v} z)-v\tau_j(\mu_j)&\ge j\big(\beta_j z-\log\big(1+\beta_j z\big ) \big),
\endaligned $$
whence 
$$\aligned\mathbb{P}(2n Y_{j}\ge u_n(x))&\le\frac{1+o(1)}{\sqrt{2\pi}}\int_{\widetilde{w}_n(k, x)(1+O(kn^{-1}))}^{+\infty} e^{-j\beta_j(z-\beta_j^{-1}\log(1+\beta_j z)) } dz.
\endaligned$$ 
Let $h(z)=z-\beta_j^{-1}\log(1+\beta_j z),$ whose derivative satisfies 
$$h'(z)=1-\frac{1}{1+\beta_j z}=\frac{\beta_j z}{1+\beta_j z}\ge \frac{\beta_j a}{1+\beta_j a}$$ 
for any $z\ge a>0$ and then  
$$\int_a^{\infty} e^{-j\beta_j h(z)}dz\le\frac{1+\beta_j a}{j\beta_j^2 a}\int_a^{\infty}e^{-j\beta_j h(z)}h'(z)dz=\frac{1+\beta_j a}{j\beta_j^2 a} e^{-j\beta_j h(a)}.$$ 
Here, we use the fact $\lim_{z\to+\infty}e^{-j\beta_j h(z)}=0.$ 
As a natural consequence, with $a=\widetilde{w}_n(k, x)(1+O(\frac k n))$ and the fact $1/4\le j \beta_j^2\le 1/2,$ it follows from \eqref{exp2} that 
\begin{equation}\label{forj}\mathbb{P}(2n Y_{j}\ge u_n(x))\le\frac{(4+o(1))(1+\beta_j a)}{\sqrt{2\pi} a } e^{-j(\beta_j a-\log(1+\beta_j a))}.
\end{equation}
Now the function $y-\log(1+y)$ is increasing on $y>0$ and 
$$\frac{\widetilde{w}_n(k, x)}{3\sqrt{n}}\le \frac{\widetilde{w}_n(k, x)(1+O(\frac k n))}{2\sqrt{j}}\le \beta_j a\le \frac{\widetilde{w}_n(k, x)(1+O(\frac k n))}{\sqrt{2j}}\le \frac{\widetilde{w}_n(k, x)}{\sqrt{n}},$$ whence 
 \begin{equation}\label{ups}\aligned\mathbb{P}(2n Y_{j}\ge u_n(x))&\le\frac{(4+o(1))(\sqrt{n}+ \widetilde{w}_n(k, x))}{\sqrt{2\pi} \sqrt{n}\, \widetilde{w}_n(k, x)} e^{-j(\frac{\widetilde{w}_n(k, x)}{3\sqrt{n}}-\log(1+\frac{\widetilde{w}_n(k, x)}{3\sqrt{n}})) }\\
 &\le\frac{4}{\sqrt{n}\wedge \widetilde{w}_n(k, x)} e^{-j(\frac{\widetilde{w}_n(k, x)}{3\sqrt{n}}-\log(1+\frac{\widetilde{w}_n(k, x)}{3\sqrt{n}})) }
\endaligned\end{equation} 
for all $n-j_n\le j\le n.$
As a parallel result, we also have 
\begin{equation}\label{ineq2}
	\int_{a}^{\infty} e^{-v(\tau_j(\mu_j+\frac{\sqrt{2j+v}}{v} z))-\tau_j(\mu_j))} dz\le \frac{4(\sqrt{n}+ a)}{\sqrt{n} a} e^{-j(\frac{a}{3\sqrt{n}}-\log(1+\frac{a}{3\sqrt{n}}))}
\end{equation}   
for any $a>0.$
Next, we are going to verify the equality \eqref{vie}.  
Indeed, plugging \eqref{expre*} and \eqref{ups} with $a=n^{1/5}$ into \eqref{exp2}, for any $x$ such that $\widetilde{w}_n(k, x)\le n^{1/5},$
it holds 
\begin{equation}\label{sum00}\aligned &\quad \mathbb{P}(2n Y_{j}\ge u_n(x))\\
&=\frac{1}{\sqrt{2\pi}}\int_{\widetilde{w}_n(k, x)(1+O(k/n))}^{n^{1/5}}e^{-\frac{z^2}{2}} dz (1+\tilde{O}(n^{-1}+z^4 (n+v)^{-1}+z n^{-1/2}))\\
&\quad+\frac{1+\tilde{O}(n^{-1})}{\sqrt{2\pi}}\int_{n^{1/5}}^{+\infty} e^{-v(\tau_j(\mu_j+\frac{\sqrt{2j+v}}{v} z))-\tau_j(\mu_j))} dz.
\endaligned \end{equation}  
The second integral in \eqref{sum00}, via \eqref{ineq2}, has the same order as  
$$
n^{-1/5} e^{-j(\frac{1}{3}n^{-\frac3{10}}-\log(1+\frac{1}{3}n^{-\frac3{10}}))}.$$ 
For the first integral in \eqref{sum00}, we see 
$$\aligned &\quad\int_{\widetilde{w}_n(k, x)(1+O(k/n))}^{n^{1/5}}e^{-\frac{z^2}{2}} dz (1+\tilde{O}(n^{-1}+z^4 (n+v)^{-1}+z n^{-1/2}))\\
&=(1+\tilde{O}(n^{-1/5}))\big(\int_{\widetilde{w}_n(k, x)(1+O(k/n))}^{+\infty}e^{-\frac{z^2}{2}} dz-\int_{1/5}^{\infty} e^{-\frac12 z^2} dz\big)\\
&=(1+\tilde{O}(n^{-1/5}+\frac kn+\widetilde{w}_n^{-2}(k, x)))\widetilde{w}_n^{-1}(k, x)e^{-\frac{\widetilde{w}_n^2(k, x)}{2}(1+O(\frac kn))}+O(n^{-\frac15}e^{-\frac{1}{2}n^{\frac25}})\\
&=(1+\tilde{O}(n^{-1/5}+\widetilde{w}_n^{2}(k, x)\frac kn+\widetilde{w}_n^{-2}(k, x)))\widetilde{w}_n^{-1}(k, x)e^{-\frac{\widetilde{w}_n^2(k, x)}{2}}\\
&=(1+O(\widetilde{w}_n^{-2}(k, x)))\widetilde{w}_n^{-1}(k, x)e^{-\frac{\widetilde{w}_n^2(k, x)}{2}}.
\endaligned$$
Now, we need to compare these two orders. Considering 
the facts $x-\log(1+x)\ge \frac14 x^2$ for $|x|$ small enough, and $$j\ge n-j_n\ge \frac{4 n}{5}$$ and the constrain  $\widetilde{w}_n(t, x)\le 2n^{\frac{1}{10}},$ one knows  
$$\mathbb{P}(2n Y_{j}\ge u_n(x))=\frac{1+O(\widetilde{w}_n^{-2}(k, x))}{\sqrt{2\pi}\widetilde{w}_n(k, x)}e^{-\frac{\widetilde{w}_n^2(k, x)}{2}},$$
which is exactly the wanted \eqref{vie}.  
For \eqref{viae1} and \eqref{viae2}, the upper bound \eqref{forj} is applied with $j=n$, we see 
$$\aligned\mathbb{P}(2n Y_{n}\ge u_n(x))&\le\frac{(4+o(1))(1+\beta_n \widetilde{w}_n(0, x))}{\sqrt{2\pi} \widetilde{w}_n(0, x) } e^{-n(\beta_n \widetilde{w}_n(0, x)-\log(1+\beta_n \widetilde{w}_n(0, x)))}
\endaligned$$
and then similarly it follows from $(2n^{-1/2})\le \beta_n \le (2n)^{-1/2}$ that 
\begin{equation}\label{ups1}\aligned\mathbb{P}(2n Y_{n}\ge u_n(x))&\le\frac{(4+o(1))(1+\beta_n \widetilde{w}_n(0, x))}{\sqrt{2\pi} \widetilde{w}_n(0, x) } e^{-n(\frac{ \widetilde{w}_n(0, x)}{2\sqrt{n}}-\log(1+\frac{ \widetilde{w}_n(0, x)}{2\sqrt{n}}))}
\endaligned\end{equation}

When $2n^{\frac{1}{10}}<\widetilde{w}_n(0, x)\le \sqrt{n},$ which implies $$ 0<\frac{\widetilde{w}_n(0, x)}{2\sqrt{n}}\le \frac{1}{2}.$$  The elementary inequality 
$$x-\log(1+x)\ge \frac{x^2}{3} \quad \text{for all} \quad x\in (0, 1/2], $$ with \eqref{ups1} gains  
$$\mathbb{P}(2n Y_{n}\ge u_n(x))\le 2n^{-\frac{1}{10}}e^{-\frac{\widetilde{w}_n^2(0, x)}{12 }}.
$$
Similarly, when $\widetilde{w}_n(0, x)>n^{1/2},$ we use instead another lower bound  $$x-\log (1+x)\ge \frac{x}{3} \quad \text{for all} \quad x\ge 1/2$$ to get 
$$\mathbb{P}(2n Y_{n}\ge u_n(x))\le 2 n^{-1/2}e^{-\frac{\sqrt{n}\widetilde{w}_n(0, x)}{6}}.
$$
At last, for $|\widetilde{w}_n(k, x)|\le M,$ \eqref{exp2}  brings us 
$$\aligned \mathbb{P}(2n_{n-k}\ge u_n(x))&\ge\frac{1+o(1)}{\sqrt{2\pi}}\int_{M+o(1)}^{+\infty}e^{-v(\tau_j(\mu_j+\frac{\sqrt{2j+v}}{v} z))-\tau_j(\mu_j))} dz\\
&\ge\frac{1+o(1)}{\sqrt{2\pi}}\int_{M+o(1)}^{n^{1/6}\sqrt{\log s_n}} e^{-\frac{z^2}{2}} dz\\
&=1-\Phi(M+o(1))-\frac{1+o(1)}{\sqrt{2\pi}}\int_{n^{1/6}\sqrt{\log s_n}}^{+\infty} e^{-\frac{z^2}{2}} dz\\
&=1-\Phi(M)+o(1),
\endaligned $$
whence 
$$\mathbb{P}(2n_{n-k}\ge u_n(x))\le \Phi(M+1)$$ uniformly on $k$ and $x$ such that 
$|\widetilde{w}_n(k, x)|\le M.$ The proof of Lemma \ref{vi} is then finished. 
 \end{proof} 

\section{Proof of Theorem \ref{main2}} 
Examining the proof of Theorem \ref{main}, we see 
$$\aligned\sup_{x\in\mathbb{R}}|F_n(x)-e^{-e^{-x}}|&=\sup_{x\in (-\ell_1(n), \ell_2(n))}|F_n(x)-e^{-e^{-x}}|\\
&=\frac{(1+o(1))}{2\log s_n}\sup_{x\in (-\ell_1(n), \ell_2(n))}e^{-e^{-x}-x}(\ell_2(n)-x)^2.
\endaligned $$
Set $$g(x)=e^{-x}+x-2\log  (\ell_2(n)-x), \quad -\ell_1(n)\le x\le \ell_2(n)$$
and we are going to identify the minimizer of $g.$  
Now $$g'(x)=1-e^{-x}+\frac{2}{\ell_2(n)-x},$$ which is strictly increasing and $g'(-\ell_1(n))<0$ and $g'(\ell_2(n))=+\infty.$ Thus, $g$ has a unique minimizer, denoted by $x_0.$ 
It is ready to know $g'(0)>0$ and 
$$\aligned g'(-\frac{2}{\ell_2(n)})&=1-e^{\frac{2}{\ell_2(n)}}+\frac{2}{\ell_2(n)+\frac{2}{\ell_2(n)}}\\
&=-\frac{2}{\ell_2(n)}-\frac{2}{\ell_2^2(n)}+\frac{2}{\ell_2(n)+\frac{2}{\ell_2(n)}}+o(\ell_2^{-2}(n))\\
&<0,
\endaligned $$ 
whence $$-\frac{2}{\ell_2(n)}<x_0<0.$$ 
As a consequence 
$$g(x_0)<g(0)=1-2\log \ell_2(n)$$ and also it follows from the monotonicity of involved functions that 
$$g(x_0)>1-\frac{2}{\ell_2(n)}-2\log\big(\ell_2(n)+\frac{2}{\ell_2(n)}\big)=1-2\log\ell_2(n)+o(1).$$
Therefore, 
$$\aligned \sup_{x\in (-\ell_1(n), \ell_2(n))}e^{-e^{-x}-x}(\ell_2(n)-x)^2=e^{-g(x_0)}=e^{-1} \ell_2^2(n)(1+o(1)),
\endaligned $$
which implies 
$$\sup_{x\in\mathbb{R}}|F_n(x)-e^{-e^{-x}}|=\frac{(\log\log s_n)^2(1+o(1))}{2 e \log s_n}.$$ 
The proof is then finished. 
\subsection*{Conflict of interest}  The authors have no conflicts to disclose.

\subsection*{Acknowledgment} The authors would like to thank their colleague Professor Xin-xin Chen for help discussions during lunch time.

\end{document}